\newcommand{\version}{\today}
\theoremstyle{plain}
\newtheorem{thm}{THEOREM}[section]
\newtheorem{lm}[thm]{LEMMA}
\theoremstyle{definition}
\newtheorem{defi}{Definition}[section]
\newtheorem{example}{Example}[section]
\newtheorem{remark}{Remark}[section]
\theoremstyle{definition}
\newtheorem{defi}[thm]{DEFINITION}
\newtheorem{remark}[thm]{Remark}
\theoremstyle{remark}
\newcommand{\upchi}{\raise1pt\hbox{$\chi$}}
\newcommand{\R}{{\mathord{\mathbb R}}}
\newcommand{\C}{{\mathord{\mathbb C}}}
\newcommand{\hn}{{\mathord{\widehat{n}}}}
\newcommand{\F}{{\mathcal{F}}}
\renewcommand{\|}{{\Vert}}
\numberwithin{equation}{section}
\newcommand{\un}{{\rm 1\kern -2.5pt l}}
\def\subd{{\partial}}
\def\tnorm{|\!|\!|}
\begin{document}
\markboth{\scriptsize{CL \version}}{\scriptsize{CL February 20, 2015}}
\def\mn{{\bf M}_n}
\def\hn{{\bf H}_n}
\def\hnp{{\bf H}_n^+}
\def\hmnp{{\bf H}_{mn}^+}
\def\E{{\mathcal E}}
\def\F{{\mathcal F}}
\def\D{{\mathcal D}}
\def\S{{\mathcal S}}
\title{{\sc Duality and Stability for Functional Inequalities}}

\author{\vspace{5pt} Eric A. Carlen\\
\vspace{5pt}\small{ Department of Mathematics, Hill Center,}\\[-6pt]
\small{Rutgers University,
110 Frelinghuysen Road
Piscataway NJ 08854-8019 USA}\\
}

\date{\version}

\maketitle

\bigskip
\centerline{\it Dedicated to Dominique Bakry on the occasion of his 60th birthday}

\footnotetext [1]{Work partially supported by U.S.
National Science Foundation grant DMS 1501007.\hfill\break
\copyright\, 2016 by the author. This paper may be reproduced, in its
entirety, for non-commercial purposes.
}

\begin{abstract}  We develop a general framework for using duality to ``transfer''  stability results for 
a functional inequality to its dual inequality. As an application, we prove a stability bound for the Hardy-Littlewood-Sobolev inequality, which is related by duality, and the results proved here, to a stability inequality for the Sobolev inequality
proved by Bianchi and Egnell, and extended by Chen, Frank and Weth. We also discuss how the results proved here can be combined with the  proof of functional inequalities by means of flows to 
prove stability bounds with computable constants

\end{abstract}

\if fasle

\medskip
\leftline{\footnotesize{\qquad Mathematics subject classification numbers: 81V99, 82B10, 94A17}}
\leftline{\footnotesize{\qquad Key Words: uniform convexity, entropy }}

\fi



\maketitle

\section{Introduction}

Consider two functionals $\E$ and $\F$ on some normed real linear  space $X$ with values in $(-\infty,\infty]$ that are related by
a functional  inequality of the form 
\begin{equation}\label{in1}
\E(x) \leq \F(x) \quad{\rm  for\ all}\quad x\in X\ .
\end{equation}
We may regard any complex linear space as a real linear space by restriction of the linear structure. 
The functional inequality $\E \leq \F$ on $X$ is {\em sharp} if $\E(x) \leq \F(x)$ for all $x\in X$ and if for all $\lambda< 1$, there exist $x\in X$ such that $\E(x) > \lambda \F(x)$. 
The subset $X_0$ defined by 
\begin{equation}\label{in2}
X_0 = \{x\in X\ : \ \E(x)= \F(x) \quad{\rm and}\ \F(x) < \infty \}
\end{equation}
is called the {\em set of optimizers} of the inequality, When $X_0 \neq \emptyset$, the inequality is {\em optimal}.  An optimal functional inequality is necessarily sharp, but not {\em vice-versa}. 

Suppose that the inequality (\ref{in1}) is optimal, and suppose that $\{x_n\}$  is a sequence in $X$ such that
\begin{equation}\label{staa1}
\lim_{n\to \infty}\left(\F(x_n) - \E(x_n)\right) = 0\ .
\end{equation}
We are interested in conditions under which (\ref{staa1}) implies that  \
\begin{equation}\label{staa2}
\lim_{n\to\infty}d(x_n,X_0) = 0\ ,
\end{equation}
where $d$ is some metric on $X$ which may be different from the norm metric. We are also interested in how the rates of convergence in (\ref{staa1}) and (\ref{staa2}) may be related.  The following class of convex functions on $\R_+$
is pertinent to discussion of such rates of convergence. 

\begin{defi}[Rate function]\label{rate}  A {\em rate function} is a strictly convex $\Phi:[0,\infty)\to [0,\infty)$
with $\Phi(0) = 0$. 
\end{defi}

\begin{remark}\label{ratemin} The term {\em rate function} has a well established meaning in the context of 
large deviations problems.  Our usage here is not meant to suggest any connection. It is simply motivated
by the fact that theorems proved below will relate the rates of convergence in (\ref{staa1}) and (\ref{staa2}) in terms of such functions.
\end{remark}

\begin{remark}\label{ratemin2} 
Let $\Phi(t)$ be any rate function. Then, like all convex functions from $\R$ to $\R$, $\Phi$ is differentiable almost everywhere, and for all $t\in [0,\infty)$, 
$$\Phi(t) = \int_0^t \Phi'(s){\rm d}s\ .$$
Since $\Phi$ is strictly monotone increasing, $\Phi'> 0$ almost everywhere.  Hence, if 
$\Phi_1$ and $\Phi_2$ are rate functions,
$\min\{\Phi_1',\Phi_2'\} > 0$
almost everywhere, and then
$$\Phi_0 := \int_0^t  \min\{\Phi_1'(s),\Phi_2'(s)\}\ $$
is a rate function such that $\Phi_0(t) \leq \min\{\Phi_1(t),\Phi_2(t)\}$ for all $t$. Note that 
$\min\{\Phi_1(t),\Phi_2(t)\}$ need not be convex. 
\end{remark}

\begin{defi}[Stability bounds] Let $\E \leq \F$ be an optimal functional inequality on a normed linear space $X$, and let
$X_0$ be the set of optimizers.   Let $d$ be a metric on $X$, not necessarily the metric induced by the norm, and let
$\Phi$ be a rate function. Then the functional inequality $\E \leq \F$ is {\em $(d,\Phi)$-stable} in case
\begin{equation}\label{dphi1}
\F(x) - \E(x) \geq \Phi(d(x,X_0)) \quad {\rm for\  all}\quad  x\in X\ .
\end{equation}
In the special case that $d(x,y) = \|x-y\|_X$ and $\Phi(t) = \kappa t^2$ for some $\kappa>0$ and all $t\in [0,\infty)$, so that
\begin{equation}\label{dphi2}
\F(x) - \E(x) \geq \kappa \inf_{x\in X_0}\|x-z\|_X^2  \quad {\rm for\  all}\quad  x\in X\ ,
\end{equation}
we say the the functional inequality $\E \leq \F$ is {\em $\kappa$-stable}. 
\end{defi} 

Evidently, when the optimal functional inequality is $(\Phi,d)$-stable, the rate of convergence in (\ref{staa1}) governs the rate of convergence in (\ref{staa2}). 

$L^p$ norms on a measure space $(\Omega,\mathcal{B},\mu)$ occur frequently in the examples that illustrate our results as well as in their applications. Throughout the exposition, for all $1\leq p < \infty$, $\|f\|_p$ denotes $\left(\int_\Omega |f|^p {\rm d}\mu\right)^{1/p}$, and $\|f\|_\infty$ denotes the essential supremum of $|f|$.  

Before further developing the abstract theory, we turn to a concrete example, namely the Sobolev inequality.  Let $n\geq 3$ and let $0 < \alpha < n/2$.  There is a unique domain
in $L^2(\R^n)$ containing the smooth, compactly supported functions on which $(-\Delta)^{\alpha/2}$ is self-adjoint, where
$\Delta$ is the Laplacian on $\R^n$.  See Lieb and Loss \cite{LL}  for a concrete description of this domain.  (For instance,
$(-\Delta)^{-\alpha/2}$ may be written in term of the Fourrier transform, and also as the composition of a Riesz potential with an integer power of the Laplacian.)

The Sobolev inequality in its optimal form says that  
 for all $f$ in the domain of $(-\Delta)^{\alpha/2}$, 
\begin{equation}\label{sob1}
\left(\int_{\R^n} |f(\eta)|^{2n/(n-2\alpha)}{\rm d}\eta \right)^{(n-2\alpha)/n} \leq \S_{n,\alpha}
\int_{\R^n} |(-\Delta)^{\alpha/2} f(\eta)|^2{\rm d}^n\eta \ .
\end{equation}
where
\begin{equation}\label{sob1a}\S_{n,\alpha}  = \frac{ \| h\|_{2n/(n-2\alpha)}^2}{ \|(-\Delta)^{\alpha/2} h\|_2^2} \quad{\rm and}\quad 
h(\eta) = (1+|\eta|^2)^{-(n-2\alpha)/2}\ .
\end{equation}
Evidently there is equality in (\ref{sob1}) when $f = h$. More generally, taking into account the homogeneity, and the translation and dilation invariance of the inequality, there is equality 
when $f$  is a multiple of a function of the form $h(a(\eta - \eta_0))$ for some $a>0$ and some $\eta_0\in \R^n$. In fact, these are the {\em only} cases of equality. 

In the special case $\alpha=1$, for which we have $\|(-\Delta)^{1/2}f\|_2^2 = \|\nabla f\|_2^2$, this result was proved by Aubin \cite{A} and Talenti \cite{T}. The general case is a direct consequence of Lieb's theorem \cite{L} on the optimal 
Hardy-Littlewood-Sobolev (HLS)  inequality, to which (\ref{sob1}) is equivalent by duality as we recall below. 
The integrals  in $ \| h\|_{2n/(n-2\alpha)}^2$ and $ \|(-\Delta)^{\alpha/2} h\|_2^2$ are easily evaluated in terms of $\Gamma$ functions, but the explicit value plays no role in what follows.

To relate this example to the general framework introduced above, 
let $X$ denote the completion of smooth, compactly supported functions $f$
on $\R^n$ in the norm   $\tnorm \cdot \tnorm_X$   defined by 
\begin{equation}\label{tnorm1}
\tnorm f\tnorm_X := \|(-\Delta)^{\alpha/2} f\|_2 \ .
\end{equation}

Define functionals $\E$ and $\F$ on $X$ by
\begin{equation}\label{tnorm1B}
\E(f) :=  \|f\|_{2n/(n-2\alpha)} \qquad{\rm and}\quad 
 \F(f) := \S_{n,\alpha}\tnorm f\tnorm_X^2 \ .
\end{equation}
 Then we can rewrite
(\ref{sob1}) in the form $\E(f) \leq \F(f)$.   By the characterization of the optimizers quoted above, this
 inequality is optimal, and the set $X_0$ of optimizers is given by
\begin{equation}\label{sob1c}
X_0 = \{ z h(a(\eta - \eta_0)) \ :\ z\in \C\ , a> 0\ , \eta_0\in \R^n\  \}\ .
\end{equation}

A line of work starting with  Bianchi and Egnell  \cite{BE} for the case $\alpha =1$, 
has addressed the stability of the Sobolev inequality. 
The recent paper of Chen, Frank and Weth \cite{CFW} extends the result of Bianchi and 
Egnall to the full range of $\alpha$.
Theorem 1 of \cite{CFW} states that 
 that for all $\alpha \in (0,n/2)$,  there exists some $\kappa_{BE}>0$ depending only on $n$ and $\alpha$ such that 
 that for all $f\in X$,
\begin{equation}\label{BE}
\F(f) - \E(f) \geq \kappa_{BE} \inf_{z\in X_0}\tnorm f-z\tnorm_X^2\ .
\end{equation}
In our terminology, this says that  the  Sobolev inequality $\kappa_{BE}$-stable.  

The papers \cite{BE} and \cite{CFW} are separated by 22 years, and in-between a number of authors treated various integer values of $\alpha$; see \cite{CFW} for the history. The breakthrough to general $\alpha$ in \cite{CFW} turns on an insightful use of the stereographic projection to lift the problem to the sphere, which was also crucial to Lieb's determination of the sharp form of the HLS inequality, though the use made of this in \cite{CFW} is somewhat different; it facilitates an eigenvalue computation that has no analog in Lieb's work. 

All of the work discussed in the previous paragraph follows the general strategy initiated by Bianchi and Egnell of proving a ``local'' stability bound for functions that are sufficiently close to the set of optimizers, and then 
using  a compactness argument to show by contradiction that there are no ``almost optimizers'' that are far from the set of optimizers, and thus to  obtain the globally valid result (\ref{BE}).  It is on account of this
 compactness argument that these proofs do not give give any means of explicitly estimating $\kappa_{BE}$ from below.   

Our main concern in this paper is with stability theorems for functional inequalities, such as the ones just discussed, and how they may be
 proved using {\em duality arguments} and {\em flow arguments}. 

We begin with duality. As noted above, it has been known for some time
that the optimal form of the Sobolev inequality is {\em dual} to Lieb's optimal 
Hardy-Littlewood-Sobolev (HLS) inequality; see e.g. \cite{CL}, as we now recall.  
 
 Define $Y = L^{2n/(n+2\alpha)}(\R^n)$.  For $g\in Y$, define $\E^*(g) = \|g\|_{2n/(n+2\alpha)}^2$. 
  It is well known, and easy to check, that
 if we define, for all $f\in L^{2n/(n-2\alpha)}(\R^n)$ and all $g\in L^{2n/(n+2\alpha)}(\R^n)$, a real bilinear form by
 \begin{equation}\label{dufoA}
 \langle f,g\rangle = 2\Re\left( \int_{R^n} f^*g {\rm d}x\right)\ ,
 \end{equation}
 then, using the notation of the previous example,
 \begin{equation}\label{dufoB}
 \E^*(g) = \sup_{f\in  L^{2n/(n-2\alpha)}(\R^n)} \left\{  \langle f,g\rangle - \|f\|_{2n/(n-2)}^2 \right\} = 
 \sup_{f\in X} \left\{  \langle f,g\rangle -\E(f) \right\} \ .
 \end{equation}
 The reason for the factor of 2 in the bilinear form on (\ref{dufoA}) is that it avoids factors of $1/2$ in the 
 definitions of $\E$ and $\E^*$ that would otherwise be required and are inconvenient in our context. The calculations
 justifying this are explained below when we recall facts about the Legendre transform; (\ref{dufoB}) says that 
 {\em $\E^*$ is the Legendre transform of $\E$}. 
 We point out that the second equality in (\ref{dufoB}) is valid since the Sobolev space $X$, defined in 
 the previous example, is dense in $L^{2n/(n-2\alpha)}(\R^n)$. 

Now define a functional $\F^*$ on $Y$ by
\begin{equation}\label{dufoC}
 \F^*(g) = \sup_{f\in  X} \left\{  \langle f,g\rangle - \F(f) \right\} = 
 \sup_{f\in X} \left\{  \langle f,g\rangle - \S_{n,\alpha}\|(-\Delta)^{\alpha/2}f\|_2^2 \right\} \ .
 \end{equation}
A simple completion of the square now shows that
$\F^*(g) = \S_{n,\alpha}^{-1} \|(-\Delta)^{-\alpha/2}f\|_2^2$.
The right hand side may be written in terms of the Green's function for  $(-\Delta)^\alpha$; that is
\begin{equation}\label{green}
\|(-\Delta)^{-\alpha/2}f\|_2^2 = \mathcal{C}_{n,\alpha}\int_{\R^n}\int_{\R^n} g(x) \frac{1}{|x-y|^{n-2\alpha}} g(y){\rm d}x {\rm d}y
\end{equation}
where an explicit expression for the constant $\mathcal{C}_{n,\alpha}$ may be found in \cite{LL}. This formula has the advantage that the right hand side is well-defined for any non-negative measurable function $g$. 

By the Sobolev inequality, for all $f\in X$ and all $g\in Y$, 
$$\langle f,g\rangle - \F(f) \leq   \langle f,g\rangle - \E(f)\ .$$
Taking the supremum in $f$ over $X$ on both sides, we obtain
\begin{equation}\label{dufoA1}
\F^*(g) \leq \E^*(g) \qquad {\rm for\ all}\quad g\in Y\ .
\end{equation}
This is the well known {\em order reversal} property of the Legendre transform. Applying the Legendre transform to a functional inequality $\E \leq \F$ yields a new inequality for the Legendre transforms, namely $\F^* \leq \E^*$. 

The inequality (\ref{dufoA1}) is Lieb's optimal LHS inequality. More explicitly, it states that
\begin{equation}\label{hlsA}
\|(-\Delta)^{-\alpha/2} g\|_2^2 \leq \S_{n,\alpha}\| g\|_{2n/(n+2\alpha)}^2\ .
\end{equation}
The fact that the same constant $\S_{n,\alpha}$ shows up in (\ref{hlsA}) and in (\ref{sob1}) is no accident; it is essentially a consequence of the involutive nature of the Legendre transform: Taking the dual of $\E \leq \F$, we obtain $\F^* \leq \E^*$. If for some $\lambda < 1$, we could improve this to $\F^* \leq \lambda \E^*$, then taking the dual again we would obtain  $\E^{**}\leq \lambda \F^{**}$. Under mild regularity conditions specified in the Fenchel-Moreau Theorem
discussed below, the Legendre transform is involutive  so that $\E^{**} = \E$
and $\F^{**} = \F$. Thus we would have $\E \leq \lambda \F$, but this is impossible when $\E \leq  \F$ is optimal. In other words, when when the Legendre transform acts as an involution, it takes sharp inequalities to sharp inequalities. In particular, (\ref{sob1}) and (\ref{hlsA}) are equivalent inequalities,   which for general $\alpha$ were first found by Lieb \cite{L} in the form (\ref{hlsA}).

Moreover, the duality relating (\ref{sob1}) and (\ref{hlsA}), as well as other dual pairs of optimal inequalities, goes much further. As we show here, when $\E \leq \F$ is not only optimal, but stable, then under suitable quantitative convexity assumptions on $\E$, one obtains both optimality and a stability result for the dual inequality $\F^* \leq \E^*$.  As a particular consequence, we shall obtain the following stability theorem for the HLS inequality:

\begin{thm}\label{HLSstth} For all $\alpha \in (0,n/2)$
There is a constant $\kappa_{BE}^*> 0$ depending only on $n$ and $\alpha$ such that for all $g\in L_{2n/(n-2\alpha)}(\R^n)$,
 \begin{equation}\label{HLSB3}
  \S_{n,\alpha}\| g\|_{2n/(n+2\alpha)}^2 - \|(-\Delta)^{-\alpha/2} g\|_2^2 \geq   
  \kappa_{BE}^*\inf_{y\in  Y_0}\| g - y\|_{2n/(n+2\alpha)}^2\ ,
  \end{equation}
  and the constant $\kappa_{BE}^*> 0$ is explicitly computable in terms of the constant $\kappa_{BE}$ in (\ref{BE}). 
\end{thm}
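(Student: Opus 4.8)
The plan is to \emph{transfer} the Bianchi--Egnell/Chen--Frank--Weth bound (\ref{BE}) across the Legendre duality already assembled above, rather than to re-run any compactness argument. First I would fix the dictionary: the primal inequality is $\E \le \F$ on $X$ with optimizer set $X_0$ from (\ref{sob1c}), its Legendre duals are $\E^*(g)=\|g\|_{2n/(n+2\alpha)}^2$ and $\F^*(g)=\S_{n,\alpha}^{-1}\|(-\Delta)^{-\alpha/2}g\|_2^2$, and the dual inequality $\F^*\le \E^*$ is exactly (\ref{hlsA}). Since $\F-\E\ge 0$ attains its minimum value $0$ precisely on $X_0$, the gradients of $\E$ and $\F$ agree there, so the dual optimizer set is $Y_0=\nabla\E(X_0)=\nabla\F(X_0)$, and (\ref{HLSB3}) is the assertion that $\F^*\le\E^*$ is $(d_Y,\kappa_{BE}^*\,t^2)$-stable, where $d_Y$ is the $L^{2n/(n+2\alpha)}$ metric.

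The elementary half of the transfer is order-reversal refined to deficits. For $g\in Y$ let $f=\nabla\F^*(g)$, so that $\F^*(g)=\langle f,g\rangle-\F(f)$ by the Fenchel--Young equality, while $\E^*(g)\ge\langle f,g\rangle-\E(f)$ merely by the defining supremum. Subtracting gives
\[
\delta^*(g):=\E^*(g)-\F^*(g)\ \ge\ \F(f)-\E(f)\ \ge\ \kappa_{BE}\,d_X(f,X_0)^2,
\]
the last step being (\ref{BE}). This already shows $\F^*\le\E^*$ is optimal and stable, but measured through the $X$-distance of the preimage $f$ rather than through $d_Y(g,Y_0)$.

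The quantitative heart is to replace $d_X(f,X_0)$ by $d_Y(g,Y_0)$, and this is where the ``quantitative convexity of $\E$'' advertised in the introduction must enter. Let $y_0\in Y_0$ be the $L^{2n/(n+2\alpha)}$-projection of $g$ and $z_0=\nabla\F^*(y_0)\in X_0$. Using $\nabla\E^*(y_0)=\nabla\F^*(y_0)=z_0$ together with $\E^*=\F^*$ on $Y_0$, one obtains the exact Bregman identity $\delta^*(g)=D_{\E^*}(g\Vert y_0)-D_{\F^*}(g\Vert y_0)$, in which $D_{\F^*}(g\Vert y_0)=\F^*(g-y_0)$ because $\F^*$ is a homogeneous quadratic form. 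The functional $\E^*(g)=\|g\|_{2n/(n+2\alpha)}^2$ is $2$-uniformly convex by Hanner's inequality (here $1<\tfrac{2n}{n+2\alpha}<2$), so $D_{\E^*}(g\Vert y_0)\ge \tfrac{c}{2}\,d_Y(g,y_0)^2$ with an explicit modulus $c=c(n,\alpha)>0$, and therefore
\[
\delta^*(g)\ \ge\ \tfrac{c}{2}\,d_Y(g,y_0)^2-\F^*(g-y_0).
\]

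I expect the balancing in this last inequality to be the main obstacle. The tempting shortcut --- pushing the bound $d_X(f,X_0)^2$ of the second paragraph forward through the gradient map $\nabla\F=\mathrm{const}\cdot(-\Delta)^\alpha$ --- is not available, because $\nabla\F$ is not Lipschitz from $X$ to $Y$: the estimate it would require, $\|(-\Delta)^{\alpha}w\|_{2n/(n+2\alpha)}\le C\,\|(-\Delta)^{\alpha/2}w\|_2$, is precisely the reverse of (\ref{hlsA}) and is false, so the map compresses distances in the unhelpful direction and the gain has to be produced intrinsically on the dual side. Thus everything comes down to dominating the quadratic defect $\F^*(g-y_0)$ by the convexity gain $\tfrac c2 d_Y(g,y_0)^2$. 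The inequality (\ref{hlsA}) only gives $\F^*(g-y_0)\le d_Y(g,y_0)^2$, which is too weak on its own; the improved, gapped version valid for the residual $g-y_0$ normal to $Y_0$ is exactly the quantitative information carried by $\kappa_{BE}$, and it is this gap --- together with the explicit modulus $c(n,\alpha)$ --- that closes the estimate and yields a computable $\kappa_{BE}^*$. Establishing that gap uniformly over the noncompact family $Y_0$, with constants controlled only by $\kappa_{BE}$, is the delicate point, and is precisely where working through convexity rather than compactness earns the explicit constant.
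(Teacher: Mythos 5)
Your dictionary, your ``elementary half'' ($\delta^*(g)\ge \F(f)-\E(f)\ge\kappa_{BE}\,d_X(f,X_0)^2$ for $f=\nabla\F^*(g)$), and your diagnosis that the whole problem is converting $d_X(f,X_0)$ into $d_Y(g,Y_0)$ are all correct. But the Bregman route you then propose leaves exactly that conversion unproved, and the step you defer is not recoverable from $\kappa_{BE}$ in the way you suggest. The inequality $\delta^*(g)\ge c\,d_Y(g,y_0)^2-\F^*(g-y_0)$ with $c=\frac{n-2\alpha}{n+2\alpha}<1$ is vacuous without a gapped bound $\F^*(g-y_0)\le(1-\delta)\|g-y_0\|_Y^2$ on the residual, and such a gap, uniform over the noncompact family $Y_0$, is nothing other than a local stability statement for the HLS inequality itself --- the very theorem being proved. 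The bound (\ref{BE}) constrains the primal deficit $\F-\E$ as a functional on $X$; there is no linear notion of ``normal to $Y_0$'' in $L^{2n/(n+2\alpha)}$ along which it could be transplanted, so closing your estimate would require redoing a Bianchi--Egnell-type spectral analysis on the dual side, with the nonlocal operator $(-\Delta)^{-\alpha/2}$ --- precisely the complication the duality argument is meant to avoid. As written, the proof has a genuine hole at its quantitative heart.

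The paper closes the argument with two tools, both flowing from the single hypothesis that $\E^*$ is $\lambda$-convex with $\lambda=\frac{n-2\alpha}{n+2\alpha}$ (Theorem~\ref{sconlp}), and both missing from your sketch. First, in your second paragraph you discarded information: by the strong Young's inequality for $(d,\Phi)$-convex functions (part {\it (1)} of Lemma~\ref{reverse2}), the transfer step strengthens to
\begin{equation*}
\E^*(g)-\F^*(g)\ \ge\ \F(f)-\E(f)+\frac{n-2\alpha}{n+2\alpha}\,\|g-\nabla\E(f)\|_Y^2\ ,
\end{equation*}
so the dual deficit controls not only the primal deficit but also the distance from $g$ to $\nabla\E(f)$. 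Second, your no-go observation is aimed at the wrong map: the paper never pushes forward through $\nabla\F=\mathrm{const}\cdot(-\Delta)^{\alpha}$, but through $\nabla\E$, the $L^p$ duality map, which by the same $\lambda$-convexity of $\E^*$ \emph{is} globally Lipschitz from $X=L^{2n/(n-2\alpha)}$ to $Y=L^{2n/(n+2\alpha)}$ with constant $\frac{n+2\alpha}{n-2\alpha}$ (Theorem~\ref{gradconlp}). One first uses the Sobolev inequality itself to pass from (\ref{BE}) to $\S_{n,\alpha}^{-1}\kappa_{BE}\inf_{z\in X_0}\|f-z\|_X^2$, then the Lipschitz bound $\|f-f_0\|_X\ge\frac{n-2\alpha}{n+2\alpha}\|\nabla\E(f)-\nabla\E(f_0)\|_Y$, and finally the triangle inequality $\|g-\nabla\E(f_0)\|_Y\le\|g-\nabla\E(f)\|_Y+\|\nabla\E(f)-\nabla\E(f_0)\|_Y$ with $\nabla\E(f_0)\in Y_0$ (Theorem~\ref{reverse}); combined with the displayed inequality this yields (\ref{HLSB3}) with the explicit constant (\ref{kbestar}). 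In short: the extra quadratic term you dropped and the Lipschitz map you ruled out are exactly what replace your unproven ``gapped HLS on residuals,'' which otherwise re-poses the stability problem on the dual side instead of solving it.
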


This result is new. If one attempts to prove it directly using the  method of Bianchi and Egnel, the non-locality of $(-\Delta)^{-\alpha/2}$ complicates matters. Perhaps these complications  could be managed, but as shown here, there is no need for this.

Notice that just as in (\ref{BE}), the norm in the remainder term is the {\em stronger} of the two norms involved in the functional inequality.

Duality methods for functional inequalities are useful because one or the other of the dual forms might be easier to prove directly. Indeed, except for the $\alpha =1$ cases, Lieb's optimal form of the HLS inequality (\ref{hlsA}) predates the optimal Sobolev inequality (\ref{sob1}). Lieb's proof of the HLS inequality makes important use of rearrangement inequalities. In particular,
by the Riesz Rearrangement Inequality and (\ref{green}), $\|(-\Delta)^{-\alpha/2}f\|_2^2$ does not decrease when $f$ is replaced by its spherical decreasing rearrangement. This is also true of $\|(-\Delta)^{-\alpha/2}f\|_2^2$ for $\alpha \leq 1$, but only for 
$\alpha \leq 1$. A pair of functional inequalities may be equivalent via the Legendre transform, but a wider range of tools and techniques may be applicable to one or the other of them. 

As we shall see, this is also true in proving stability inequalities. One tool that is useful for this purpose is {\em monotone flows}.
Let $\E \leq \F$ be an optimal functional inequality on $X$. Let $\{\Phi_t\}$ be a semigroup of transformations on $X$. Suppose  that for all $x$ in $X$, $\lim_{t\to\infty}\Phi_t(x)$ exists and belongs to $X_0$. Suppose also that for all $t>s \geq 0$,
$$\F(\Phi_t(x)) - \E(\Phi_t(x)) \leq \F(\Phi_s(x)) - \E(\Phi_s(x)) \ .$$
Then the flow associated to the semigroup $\{\Phi_t\}$ is {\em monotone for the functional inequality $\E \leq \F$}. 
Under certain additional conditions, monotone flows may be used to prove stability inequalities. Examples of this 
may be found in \cite{CF}. Here we shall combine such flow methods with duality to prove variants of the stability 
bounds for the Sobolev and HLS inequalities
with computable constants. 

The transfer of a stability bound for $\E \leq \F$ to its dual $\F^* \leq \E^*$ is not so 
simple as the almost automatic order reversal property of the Legendre transform that has been described above. 
Indeed, one way to write a stability bound for $\E \leq \F$ is 
$\E + \mathcal{R} \leq F$ where $\mathcal{R}$ is a ``remainder term''. For example, in (\ref{BE}) we would have 
$\mathcal{R}(f) = \kappa_{BE} \inf_{z\in X_0}\tnorm f-z\tnorm_X^2$. The Legendre transform of a sum is the 
{\em infimal convolution} of the Legendre transforms of the summands; see, e.g., \cite{rock}. That is, going back to the general 
setting of functional on a dual pair $(X,Y,\langle\cdot, \cdot\rangle)$, 
$$(\E + \mathcal{R})^*(y)  = \inf_{z\in Y} \{ \E^*(y-z) + \mathcal{R}^*(z)\}\ .$$
Doing this for (\ref{BE}) does not yield anything at all like (\ref{HLSB3}), or even anything that is likely to be at all useful. 
The passage from (\ref{BE}) to (\ref{HLSB3}) is more complicated, and involves an interplay between the stability bound (\ref{BE}) an {\em quantitative convexity estimates} for the functional $\E$, as will be explained below. This interplay between stability bounds and quantitative convexity bounds was crucial in the paper \cite{CFL} on stability of optimal potentials for Schr\"odinger operators. In \cite{CFL} an ``adding  and subtracting'' argument is used to combine a Bianchi-Egnell like stability bound for a family of Gagliardo-Nirenberg inequalities with a stability bound for H\"older's inequality to  prove a stability bound for an inequality relating the fundamental eigenvalue $\lambda(V)$ of a Schr\"odinger operator $-\Delta + V(x)$ with
the $L^q$ norm of $V$ for appropriate $q$. The eigenvalue inequality is the Legendre transform dual of the 
Gagliardo-Nirenberg inequality. The present work was partly motivated by the desire to understand this transfer of stability within a general framework of duality.

\medskip

\noindent{\bf \large Acknowledgements}

\medskip

I would like to thank Dominque Bakry for the invitation to present a three lecture mini-course
on functional inequalities and evolution equations in Toulouse in April 2014. The main ideas (and some other examples)
were presented during this course. Dominque Bakry, Szymon Peszat and 
Boguslaw Zegarlinski then arranged for me to give a second course on the topic 
at the CNRS-PAN Mathematics Summer Institute, June 28-July 4, 2015 in Krakow.
The applications to  quantitative stability for the Sobolev inequality were carried out with these lectures in mind. This work was partially done while I was visiting the I.M.A. in Minneapolis in the spring of 2015. I thank the I.M.A. for a providing a stimulating and congenial environment for this work. Finally, I would like to thank Rupert Frank and Elliott Lieb for many discussions about stability, and Rupert Frank and two anonymous referees for a careful reading of the first version of this work and for making  valuable comments.

\section{Duality } 

\begin{defi}\label{dualp} Let $X$ and $Y$ be two normed spaces with norms $\|\cdot \|_X$ and $\|\cdot \|_Y$
respectively. 
Let $\langle \cdot,\cdot \rangle$ be a separately continuous bilinear form on $X\times Y$ such that for all
non-zero $x\in X$, there exists $y\in Y$ with  $\langle x,y\rangle>0$ and likewise, for all non-zero $y\in Y$, there exists $x\in X$ 
with  $\langle x,y\rangle>0$.  Then the triple $(X,Y, \langle \cdot,\cdot \rangle)$ is a {\em dual pair
 of normed linear spaces}.
\end{defi}

When $(X,Y, \langle \cdot,\cdot \rangle)$ is a dual pair, the map $x\mapsto \langle x,\cdot\rangle$ is injective into
$Y^*$, the topological dual to $Y$. Likewise, $y\mapsto \langle \cdot,y\rangle$ is injective into
$X^*$, the topological dual to $X$.
Throughout this section $X$ and $Y$ together with the bilinear form $\langle\cdot,\cdot\rangle$ is a fixed dual pair of normed linear  spaces.  Most of the material well-known, and can be found in Rockafellar \cite{rock}, for example. However,  we include some proofs since they are very short and relevant to what follows. 

A convex function $\E$ from $X$ to $(-\infty,\infty]$  is {\em proper} in case it is not identically $+\infty$. 
The {\em Legendre transform} of a convex functional $\E: X \to (-\infty,\infty]$ is the function $\E^*$ on $Y$ defined by
\begin{equation}
\E^*(y) = \sup_{x\in X} \{\  \langle x,y\rangle - \E(x)\ \}\ .
\end{equation}
The function $\E^*$, being a supremum of a family of continuous linear functions is lower semicontinuous and convex. 

\begin{defi}[Closed convex function] \label{ccf}
Let $X$ be a normed linear space. A function $\F$ on $X$ with values in $(-\infty,\infty]$ 
is a {\em closed convex function} in case it is lower semicontinuous.
\end{defi}

The terminology is motivated by the fact that a convex function  $\F$ is lower semicontinuous
 if and only if its {\em epigraph} is closed; see \cite{rock}.

The {\em Fenchel-Moreau Theorem} states that if $\E$ is a proper closed  convex function on $X$, 
and $(X,Y, \langle \cdot,\cdot, \rangle)$ is the canonical  dual pair associated to $X$,
then $\E^{**} = \E$
where $\E^{**}$ denotes the Legendre transform of $\E^*$. 

It is a direct consequence of the definition that  $\langle x,y\rangle \leq \E(x) + \E^*(y)$ for all $x\in X$ and  all $y\in Y$.
This is {\em Young's inequality}. The cases of equality in this inequality are crucially important for what follows. This brings us to
the notion of the  subgradient of a convex function: 

\begin{defi}[subgradient]  Let $(X,Y, \langle \cdot,\cdot \rangle)$
be a dual pair, an let $\E$ be a convex function on $X$. The {\em subgradient of $\E$ at $x_0\in X$} is the subset 
$\subd\E(x_0)$ of $Y$ consisting of all $y$ such that
\begin{equation}\label{subg}
\E(x_1) \geq \E(x_0)  + \langle x_1-x,y\rangle \qquad{\rm for\ all}\quad x_1\in X\ .
\end{equation}
For $W\subset X$, er define $\subd \E(W) = \cup_{x\in W}\subd \E(x)$. 
\end{defi}
That is, $y \in \subd\E(x_0)$ if and only if the  hyperplane $(x,\E(x_0) + \langle x-x_0,y\rangle )$ lies below the graph of $x\mapsto \E(x)$, and intersects it at $(x_0,\E(x_0))$, so that the hyperplane $(x,\E(x_0) + \langle x-x_0,y\rangle )$ is a supporting plane to the graph of  $x\mapsto \E(x)$ at $x_0$. At this level of generality, $\subd\E(x)$
may be empty.
However, it is easy to see that if $\E$ is differentiable at $x$, with derivative $\nabla \E(x)\in Y$,  then $\subd\E(x) = \{ \nabla \E(x)\}$. 

The domain of a proper, closed convex functional $\F$ on $X$ is the set $\{ x\in X \:\ \F(x) < \infty\ \}$. As is well known, for a proper closed convex function $\F$, $\subd \F(x)$ is never empty at any $x$ in the domain of $\F$; see \cite{rock}.  In particular, when we have an optimal  functional inequality $\E \leq \F$ relating two  proper, closed convex functionals, the set of optimizers is, by
the definition (\ref{in1}), included in the domains of both $\E$ and $\F$.

\begin{example}[Squared $L^p$ norms]\label{lpex}  Let $(\Omega, \mathcal{B},\mu)$ be a measure space. Let $1 < p < \infty$, and let 
$p' = p/(p-1)$. Let $X = L^p(\Omega, \mathcal{B},\mu)$ regarded as a real Banach space (so that for nonzero $f \in 
L^p(\Omega, \mathcal{B},\mu)$, $f$ and $if$ are linearly independent). Let $Y = L^{p'}(\Omega, \mathcal{B},\mu)$
regarded as a real Banach space, and define $\langle \cdot, \cdot \rangle$ on $X\times Y$ by
$$\langle f,g\rangle = 2\Re \left(\int_\Omega f^* g{\rm d}\mu \right)\ .$$
Evidently with this bilinear form, $X$ and $Y$ are a dual pair.  Consider the functional $\E$ on $X$ defined by 
$\E(f) = \|f\|_p^2$.
Since $t\mapsto t^2$ is convex and increasing on $[0,\infty)$, it is a consequence 
of the Minkowski inequality that $\E$ is convex. 

By H\"older's inequality, for all non-zero $f\in X$ and $g\in Y$,
$$\langle f,g\rangle - \E(f)   = 2\Re \left(\int_\Omega f^* g{\rm d}\mu \right) - \|f\|_p^2 \leq 2\|g\|_{p'}\|f\|_p - \|f\|_p^2 = 
\|g\|_{p'}^2 - (\|g\|_{p'} - \|f\|_{p})^2\ .$$
By the well-known cases of equality in H\"older's inequality, there is equality if and only if 
$g$ is a multiple of $|f|^{p-1}{\rm sgn}f$. We conclude that
$\langle f,g\rangle - \E(f)   \geq \|g\|_{p'}^2$
with equality if and only if 
\begin{equation}\label{lpdu}
g = \|f\|_p^{2-p}|f|^{p-1}{\rm sgn}(f)\ .
\end{equation}
This proves that $\E^*(g) = \|g\|_{p'}^2$, and that $\subd\E(f) = \{\|f\|_p^{2-p}|f|^{p-1}{\rm sgn}(f)\}$ for non-zero $f$. (It is evident
that $\subd\E(0) = \{0\}$. In this example  $\E$ (together with $\E^*$ by symmetry) is differentiable, 
and $\subd\E(f) = \{\nabla \E(f)\}$ where $\nabla \E(f) = \|f\|_p^{2-p}|f|^{p-1}{\rm sgn}(f)$.

Finally, note that $\nabla \E$ is homogeneous of degree $1$, and is a norm preserving map from $X$ to $Y$. That is,
\begin{equation}\label{isom}
\|\nabla \E(f)\|_{p'} = \|f\|_p\ .
\end{equation}
\end{example}

\begin{lm}\label{sgcont} Let $\E$ and $\F$ be proper lower 
semicontinuous convex functions on $X$ and suppose that the inequality $\E \leq \F$ is valid and optimal. Let $X_0$ be the set of optimizers for it. Then for all $x_0\in X_0$,
\begin{equation}\label{in7}
\subd\E(x_0) \subset  \subd\F(x_0) \ .
\end{equation}
\end{lm}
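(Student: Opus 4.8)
The plan is to unwind the definition of the subgradient and to use, simultaneously, the two defining properties of a point of $X_0$: the pointwise inequality $\E \leq \F$ and the equality $\E(x_0) = \F(x_0)$. No compactness, convexity estimate, or even lower semicontinuity will be needed; the inclusion is a direct consequence of the order relation together with the tangency of the two functionals at an optimizer.

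First I would fix $x_0 \in X_0$ and an arbitrary $y \in \subd\E(x_0)$. By the definition of the subgradient, this means
\[
\E(x_1) \geq \E(x_0) + \langle x_1 - x_0, y\rangle \qquad \text{for all } x_1 \in X\ .
\]
The goal is to establish exactly the same inequality with $\F$ in place of $\E$, namely $\F(x_1) \geq \F(x_0) + \langle x_1 - x_0, y\rangle$ for all $x_1 \in X$, since this is precisely the assertion $y \in \subd\F(x_0)$.

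The key step is to chain three facts for an arbitrary $x_1 \in X$: the validity of the inequality $\F(x_1) \geq \E(x_1)$, then the subgradient inequality for $\E$ just recorded, and finally the optimizer equality $\E(x_0) = \F(x_0)$. This gives
\[
\F(x_1) \geq \E(x_1) \geq \E(x_0) + \langle x_1 - x_0, y\rangle = \F(x_0) + \langle x_1 - x_0, y\rangle\ .
\]
Since $x_1 \in X$ was arbitrary (the case $\F(x_1) = +\infty$ being trivial), this shows $y \in \subd\F(x_0)$, and as $y$ was an arbitrary element of $\subd\E(x_0)$, the inclusion $\subd\E(x_0) \subset \subd\F(x_0)$ follows.

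I do not expect any genuine obstacle; the entire content lies in arranging the chain above. The only points worth checking are that $x_0$ indeed belongs to the common domain, which is guaranteed by the condition $\F(x_0) < \infty$ in the definition \eqref{in2} of $X_0$, so that $\subd\F(x_0)$ is a meaningful (and, by properness and lower semicontinuity, nonempty) subset of $Y$; and that the hypotheses of properness and lower semicontinuity, while assumed in the statement, are not actually invoked by this particular argument, which uses only $\E \leq \F$ and the equality $\E(x_0) = \F(x_0)$ at the optimizer.
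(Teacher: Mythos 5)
Your proof is correct and is essentially identical to the paper's own argument: both chain $\F(x_1) \geq \E(x_1) \geq \E(x_0) + \langle x_1 - x_0, y\rangle = \F(x_0) + \langle x_1 - x_0, y\rangle$ using the inequality $\E \leq \F$ and the tangency $\E(x_0) = \F(x_0)$ at the optimizer. Your added remark that properness and lower semicontinuity are not actually used here is accurate and consistent with the paper, which invokes those hypotheses elsewhere (e.g.\ to guarantee nonemptiness of subgradients), not in this lemma's proof.
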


\begin{proof} If $y_0\in \subd\E(x_0)$, then for all $x_1\in X$,  $\E(x_1) \geq \E(x_0) + \langle x_1- x_0,y_0\rangle$.
Now using $\F(x_1) \geq \E(x_1)$ and $\F(x_0) = \E(x_0)$, we have
\begin{eqnarray}
\F(x_1) \geq \E(x_1) &\geq& \E(x_0) + \langle x_1- x_0,y_0\rangle\nonumber\\
&=& \F(x_0) + \langle x_1- x_0,y_0\rangle\ ,\nonumber
\end{eqnarray}
and this proves $y_0\in  \subd\F(x_0)$.
\end{proof}

\begin{example}\label{sqex} Let $X= Y =\R$, and let $\langle a,x\rangle = xy$. Let $\E(x) = |x|$ and $\F(x) = 2|x|$.  Then $\E \leq \F$ and
$X_0 = \{0\}$, and we have $\subd \E(0) =[-1,1]$ and $\subd\F(0) = [-2,2]$, showing that the containment in Lemma~\ref{sgcont} can be strict. 
\end{example}

\begin{lm}[Young's inequality]\label{youngin}  Let $(X,Y,\langle \cdot,\cdot\rangle)$ be a dual pair.
Let $\E$ be a closed  convex function on $X$. Then for all $x\in X$ and all $y\in Y$,
\begin{equation}\label{young}
\langle x,y\rangle \leq \E(x) + \E^*(y)\ .
\end{equation}
Moreover:

\smallskip
\noindent{\it (1)}  Whenever there  is equality in (\ref{young}),  $y\in \subd\E(x)$ and  $x\in \subd\E^*(y)$.

\smallskip
\noindent{\it (2)}  Whenever  $y\in \subd\E(x)$, there  is equality in (\ref{young}), and consequently
\begin{equation}\label{youngGF}
y\in \subd\E(x) \quad \Rightarrow \quad  x\in \subd\E^*(y)\ ,
\end{equation}
and in case $\E^{**} = \E$, the reverse implication is also valid. 
\end{lm}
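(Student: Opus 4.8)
The plan is to derive everything from the single defining property of the Legendre transform, $\E^*(y) = \sup_{x'\in X}\{\langle x',y\rangle - \E(x')\}$, and then to obtain the reverse implication in part (2) by applying the forward implication (\ref{youngGF}) a second time, now to $\E^*$ in place of $\E$. The inequality (\ref{young}) itself is immediate: for any fixed $x\in X$, the point $x$ is one competitor in the supremum defining $\E^*(y)$, so $\E^*(y) \geq \langle x,y\rangle - \E(x)$, which rearranges to (\ref{young}). No convexity beyond what is built into the definition is needed here.

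Next I would treat part (1). Assume equality holds, so that $\E^*(y) = \langle x,y\rangle - \E(x)$. To see $y\in\subd\E(x)$, I would combine this equality with the inequality $\E^*(y) \geq \langle x_1,y\rangle - \E(x_1)$, valid for every $x_1\in X$ by definition of the supremum; subtracting yields $\E(x_1) \geq \E(x) + \langle x_1-x,y\rangle$, which is exactly the subgradient condition (\ref{subg}). The claim $x\in\subd\E^*(y)$ is symmetric: for every $y_1\in Y$, (\ref{young}) gives $\E^*(y_1)\geq \langle x,y_1\rangle - \E(x)$, and substituting $\E(x) = \langle x,y\rangle - \E^*(y)$ rearranges to $\E^*(y_1)\geq \E^*(y) + \langle x, y_1-y\rangle$, the defining inequality for $x\in\subd\E^*(y)$.

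For part (2), I would first show that $y\in\subd\E(x)$ forces equality in (\ref{young}): the subgradient inequality $\E(x_1)\geq \E(x)+\langle x_1-x,y\rangle$, rewritten as $\langle x_1,y\rangle - \E(x_1)\leq \langle x,y\rangle - \E(x)$, holds for all $x_1$; taking the supremum over $x_1$ on the left gives $\E^*(y)\leq \langle x,y\rangle - \E(x)$, which combined with (\ref{young}) yields equality. The implication (\ref{youngGF}) then follows directly from part (1). Finally, for the reverse implication under the hypothesis $\E^{**}=\E$, I would invoke the forward implication applied to $\E^*$, which is itself a closed convex function (it is lower semicontinuous and convex as a supremum of continuous affine functions, so the results of this lemma apply to it verbatim): $x\in\subd\E^*(y)$ then implies $y\in\subd(\E^*)^*(x)=\subd\E^{**}(x)$, and the hypothesis $\E^{**}=\E$ converts this into $y\in\subd\E(x)$.

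The only point demanding care — and the main obstacle, such as it is — is this last step: I must be sure the forward implication is genuinely symmetric in the dual pair, i.e. that it applies with the roles of $X$ and $Y$ interchanged and with $\E$ replaced by $\E^*$. This requires knowing that $\E^*$ is a closed (lower semicontinuous) convex function, which is automatic, and that its Legendre transform computed via the same bilinear form $\langle\cdot,\cdot\rangle$ is precisely $\E^{**}$; the assumption $\E^{**}=\E$ is then exactly what closes the loop. Everything else is a one-line rearrangement of the supremum definition.
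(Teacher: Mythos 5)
Your proposal is correct and matches the paper's own proof in all essentials: the same rearrangement of the supremum definition gives part (1), the same supremum argument gives equality in part (2), and the paper likewise handles the reverse implication by running the argument symmetrically for $\E^*$ (obtaining $\langle x,y\rangle = \E^{**}(x) + \E^*(y)$ and invoking $\E^{**}=\E$), which is the same mechanism as your application of (\ref{youngGF}) to $\E^*$. Your explicit remark that $\E^*$ is automatically closed convex, so the lemma applies to it verbatim, is a point the paper leaves implicit.
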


\begin{proof}
We need only explain the cases of equality.  If there is equality in (\ref{young}), then for all $y_1\in Y$,
$$\langle x,y_1 \rangle  - \langle x,y\rangle \leq (\E(x) + \E^*(y_1)) - (\E(x) + \E^*(y))  = \E^*(y_1) - \E^*(y)\ ,$$
which is the same as
$\E^*(y_1) \geq \E^*(y) + \langle x,y_1- y\rangle$, which means that $y\in \subd\E(x)$. 
An analogous argument shows that $x \in \partial \E^*(y)$ when there is equality in (\ref{young}).

Conversely, suppose
that $y\in \subd\E(x)$. Then for all $x_1\in X$,
$\E(x_1) \geq \E(x) + \langle x_1- x,y\rangle$, and rearranging terms we obtain
$$\langle x,y\rangle \geq \E(x) + \langle x_1,y\rangle - \E(x_1)\ .$$
Taking the supremum over $x_1$, we obtain $\langle x,y\rangle \geq \E(x) + \E^*(y)$, and equality must hold in (\ref{young}).  An analogous argument shows that  argument  when $x \in \partial \E^*(y)$,
then $\langle x,y\rangle \leq \E^{**}(x) + \E^*(y)$,
which is the same a (\ref{young}) when $\E^{**} = \E$.
\end{proof}

\subsection{Subgradients and optimizers}

It is well-known that the Legendre transform reverses order, so that if $\E(x) \leq \F(x)$ for all $x\in X$, then $\F^*(y) \leq \E^*(y)$ for all $y\in Y$. As mentioned in the introduction, one way to see this is to note that  $\E(x) \leq \F(x)$  implies that
$\langle x,y\rangle - \F(x) \leq  \langle x,y\rangle - \E(x)$.
Taking the supremum over $x$,  we obtain $\F^*(y) \leq \E^*(y)$.  

By the Fenchel-Moreau Theorem, if $\E$ and $\F$ be proper, closed convex  functions on $X$, the Legendre transform is involutive, and in this case Legendre transforming twice yields:
\begin{equation}\label{rev27}  
  \E(x)   \leq \F (x)\quad {\rm for\ all}\ x\in X \quad \iff \quad   \F^*(y) \leq  \E ^*(y)
  \quad {\rm for\ all}\ y\in Y\ ,
 \end{equation}
which expresses the equivalence of the primal inequality $\E \leq \F$ and its dual inequality $\F^*\leq \E^*$. 

However, something more is true: Under the condition that  $\E$ and $\F$ are proper, closed convex  functions on $X$,
there is a general relation
between the optimizers (if any) of $\E \leq \F$ and the optimizers (if any) of $\F^* \leq \E^*$.

\begin{thm}[Duality for functional inequalities]\label{reverse} Let $(X,Y,\langle \cdot,\cdot\rangle)$ be a dual pair. 
Let $\E$ and $\F$ be proper, closed convex  functions on $X$ so that
$\E^{**} = \E$ and $\F^{**} = \F$.
Then for all 
$y\in \subd\F(x)$, 
\begin{equation}\label{rev1}  
\F(x) - \E(x) \leq  \E^*(y) - \F^*(y) \ ,
\end{equation}
and for all $x\in \subd\E^*(y)$,
\begin{equation}\label{rev17}  
 \F(x) - \E(x) \geq  \E^*(y) - \F^*(y)\ .
\end{equation}
Suppose further that $\E \leq \F$, or equivalently, $\F^* \leq \E^*$. 
If $X_0$ is the set of optimizers of the inequality  $\E  \leq \F$ and  $Y_0$ is the 
set of optimizers of the inequality  $\F^*  \leq \E^*$, then
\begin{equation}\label{in8b}
Y_0 = \subd\E(X_0) \qquad{\rm and}\qquad  X_0 = \subd\F^*(Y_0) \ .
\end{equation}
\end{thm}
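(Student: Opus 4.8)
The plan is to derive the two pointwise estimates (\ref{rev1}) and (\ref{rev17}) directly from Young's inequality (Lemma~\ref{youngin}), and then to obtain the set identities (\ref{in8b}) by pairing these estimates with the optimality condition and the order-reversal $\F^* \leq \E^*$. For (\ref{rev1}) I would start from $y \in \partial\F(x)$: by the equality case in Lemma~\ref{youngin}(2) applied to $\F$ we have $\F(x) = \langle x,y\rangle - \F^*(y)$, whereas Young's inequality for $\E$ gives only the one-sided bound $\E(x) \geq \langle x,y\rangle - \E^*(y)$; subtracting yields (\ref{rev1}). For (\ref{rev17}) I would start from $x \in \partial\E^*(y)$: since $\E^{**}=\E$, the reverse implication in Lemma~\ref{youngin} gives $y \in \partial\E(x)$ and hence equality in Young's inequality for $\E$, namely $\E(x) = \langle x,y\rangle - \E^*(y)$; combining this with $\F(x) \geq \langle x,y\rangle - \F^*(y)$ produces (\ref{rev17}).

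For the inclusion $\partial\E(X_0) \subseteq Y_0$ I would fix $x_0 \in X_0$ and $y \in \partial\E(x_0)$. Lemma~\ref{sgcont} upgrades this to $y \in \partial\F(x_0)$, so (\ref{rev1}) together with $\F(x_0)=\E(x_0)$ gives $\F^*(y) \leq \E^*(y)$. Simultaneously, (\ref{youngGF}) gives $x_0 \in \partial\E^*(y)$, so (\ref{rev17}) with $\F(x_0)=\E(x_0)$ gives the opposite inequality $\E^*(y) \leq \F^*(y)$. Hence $\E^*(y) = \F^*(y)$, and finiteness follows from the equality case of Young's inequality, $\E^*(y) = \langle x_0,y\rangle - \E(x_0) < \infty$; thus $y \in Y_0$.

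For the reverse inclusion $Y_0 \subseteq \partial\E(X_0)$ I would fix $y_0 \in Y_0$, so $\F^*(y_0)=\E^*(y_0)<\infty$. Since $\F^*$ is proper, closed, and convex, $\partial\F^*(y_0)$ is nonempty, and I would choose $x_0 \in \partial\F^*(y_0)$. Applying Lemma~\ref{sgcont} to the dual inequality $\F^* \leq \E^*$ at its optimizer $y_0$ gives $x_0 \in \partial\E^*(y_0)$, and since $\E^{**}=\E$ this yields $y_0 \in \partial\E(x_0)$. To place $x_0$ in $X_0$, I would note that $x_0 \in \partial\F^*(y_0)$ together with $\F^{**}=\F$ gives $y_0 \in \partial\F(x_0)$, so (\ref{rev1}) forces $\F(x_0) \leq \E(x_0)$; with $\E \leq \F$ this gives $\E(x_0)=\F(x_0)$, finite again by the Young equality. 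Hence $y_0 \in \partial\E(x_0) \subseteq \partial\E(X_0)$. The second identity $X_0 = \partial\F^*(Y_0)$ I would obtain by symmetry: the Legendre transform is involutive under the present hypotheses, so the dual of $\F^* \leq \E^*$ is again $\E \leq \F$, and applying the identity just proved to this dual pair interchanges the roles of $\E,X_0$ with $\F^*,Y_0$.

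The main obstacle is the reverse inclusion, specifically manufacturing a genuine optimizer $x_0 \in X_0$ out of an arbitrary $y_0 \in Y_0$. This step relies on the nonemptiness of $\partial\F^*(y_0)$ at a point of the domain of $\F^*$, and then on simultaneously verifying all the required relations—$x_0 \in \partial\E^*(y_0)$, $y_0 \in \partial\E(x_0)$, the equality $\E(x_0)=\F(x_0)$, and its finiteness—each of which draws on a different piece of the machinery (the symmetric use of Lemma~\ref{sgcont}, the involution $\E^{**}=\E$, and both pointwise bounds (\ref{rev1}) and (\ref{rev17})). Keeping precise track of which properness and closedness hypotheses are genuinely needed at each invocation is where the care lies.
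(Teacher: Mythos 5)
Your proposal is correct and takes essentially the same route as the paper: both derive (\ref{rev1}) and (\ref{rev17}) from Young's inequality together with its equality cases, and both prove (\ref{in8b}) by extracting subgradient elements (nonempty by properness and closedness) and feeding them into the two pointwise bounds. Your only deviations are cosmetic---you route $y_0\in\subd\E(x_0)$ through Lemma~\ref{sgcont} and the involution $\E^{**}=\E$ where the paper re-derives it directly from the equality cases of Young's inequality at $(x_0,y_0)$, you obtain the second identity in (\ref{in8b}) by a duality symmetry where the paper runs the ``entirely analogous argument,'' and your explicit tracking of finiteness makes precise a point the paper leaves implicit.
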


\begin{proof} Let $y\in \subd\F(x)$. By Young's inequality, and then the cases of equality in it,
$$\F(x) + \F^*(y) = \langle x,y\rangle  \leq  \E(x) + \E^*(y)\ . $$
Rearranging terms we obtain (\ref{rev1}).   Likewise, let $x\in \subd\E^*(y)$. Then
$$\F(x) + \F^*(y) \geq  \langle x,y\rangle  =  \E(x) + \E^*(y)\ . $$
and rearranging terms, yields (\ref{rev17}).

Now suppose that $X_0 \neq \emptyset$, and $x_0\in X_0$. By the remark preceding Example~\ref{lpex}, $\subd\E(x_0) \neq \emptyset$, and by Lemma~\ref{youngin}, for any $y_0 \in 
\subd\E(x_0)$, $x_0\in \subd\E^*(y_0)$, so that (\ref{rev17}) implies that  $0 \geq \E^*(y_0) - \F^*(y_0) $, so that $y_0\in Y_0$. 
This shows that $\subd\E(X_0)\subset Y_0$.  

Likewise, suppose that $Y_0 \neq \emptyset$, and $y_0 \in Y_0$. Then, as above, $\subd\F^*(y_0)\neq \emptyset$, and
 for any $x_0 \in 
\subd\F^*(y_0)$, $y_0\in \subd\F(x_0)$, so that (\ref{rev1}) implies that  $0 \geq \F(x_0) - \E(x_0) $, so that $x_0\in X_0$. 
This shows that $\subd\F^*(Y_0)\subset X_0$.

We next show that $Y_0 \subset \subd\E(X_0)$. Suppose that $y_0\in Y_0$. By what we have proved just above, if $x_0\in \subd\F^*(y_0)$,
then $x_0 \in X_0$.   Furthermore, by the cases of equality in Young's inequality,
\begin{equation}\label{in10}
\langle x_0,y_0\rangle = \F(x_0) + \F^*(y_0) \ .
\end{equation}
Then since $\F(x_0) = \E(x_0)$ and $\F^*(y_0) = \E^*(y_0)$, we also have $\langle x_0,y_0\rangle = \E(x_0) + \E^*(y_0)$.
Once more, by the cases of equality in Young's inequality, 
$$y_0\in \subd\E(x_0) \in   \subd\E(X_0)\ .$$
an entirely analogous argument shows that $X_0 \subset \subd\F^*(Y_0)$. 
\end{proof}

 \subsection{Quantitative  convexity}
 
 \begin{defi}[$(d,\Phi)$-convexity]  Let $X$ be a Banach space, and let $\E$ be a proper, closed convex function on $X$. 
 Let $\Phi$ be a rate function, and let $d$ be a metric on $X$.
 Let $U\subset X$ be a convex subset of the domain of $\E$. 
Then  $\E$  is  {\em $(d,\Phi)$-convex on  $U$} in case for all $x_1,x_2\in U$ and all $y\in 
\subd\E(x_1)$,
\begin{equation}\label{ams1}
\E(x_2) \geq \E(x_1) + \langle x_2-x_1,y\rangle + \Phi(d(x_1,x_2))\ .
\end{equation}
In case $d$ is the metric induced by the norm, and $\Phi(t) = \lambda t^2$ for all $t$, so that for all $x_1,x_2\in U$
\begin{equation}\label{ams2}
\E(x_2) \geq \E(x_1) + \langle x_2-x_1,y\rangle + \lambda \|x_1 - x_2\|_X^2\ ,
\end{equation}
we say that $\E$ is {\em $\lambda$-convex on $U$}. 
\end{defi}

To see that this is a strengthened form of convexity, let $z_1,z_2\in U$ and let $\alpha\in (0,1)$. Define
$x_1 = (1-\alpha)z_1+\alpha z_2$, and let $y\in \subd(\E)(x_1)$.
  Applying (\ref{ams1}) with this choice of $x_1$, and then $x_1 = z_2$ and $z_1$ in turn, we obtain
\begin{eqnarray}
\alpha \E(z_2) &\geq& \E(x_1) + \alpha\langle z_2-x_1,y\rangle + \alpha\Phi(d(x_1,z_2))\nonumber\\
(1-\alpha) \E(z_1) &\geq& \E(x_1) + (1-\alpha)\langle z_1-x_1,y\rangle + (1-\alpha)\Phi(d(x_1,z_1))\ .\nonumber
\end{eqnarray}

Adding these inequalities we obtain
\begin{eqnarray}\label{ams3}
(1-\alpha) \E(z_1)  + \alpha \E(z_2) &\geq& \E((1-\alpha)z_1+\alpha z_2) + \Phi ((1-\alpha)d(z_1,x_1)+\alpha d(x_1,z_2))\nonumber\\
&\geq& \E((1-\alpha)z_1+\alpha z_2) + \Phi(\min\{\alpha,1-\alpha\}d(z_1,z_2))\ .\nonumber
\end{eqnarray}
In the case $d$ is the metric induced by the norm on $X$ and $\Phi(t) = \lambda t^2$ we can avoid the final inequality in 
(\ref{ams3}) and compute
\begin{eqnarray*}
\Phi ((1-\alpha)d(z_1,x_1)+\alpha d(x_1,z_2)) &=& \lambda ( (1-\alpha)\|z_1 - x_1\|_X + \alpha \|Z_2 - x_1\|_X)^2\nonumber\\
&=& \lambda  \alpha(1-\alpha) \|z_1 - z_2\|_X^2 \nonumber
\end{eqnarray*}
Hence, when $\E$ is $\lambda$-convex on $U$, then 
for all $x_1,x_2\in U$ and all $\alpha\in (0,1)$,
\begin{equation}\label{lcan}
\alpha \E(x_2) + (1-\alpha)\E(x_1) \geq \E(\alpha x_2 + (1-\alpha) x_1) + \lambda \alpha(1- \alpha)\|x_2 - x_1\|_X^2\ .
\end{equation}
This is the standard definition of $\lambda$-convexity. To see that the two formulations are equivalent, 
rewrite (\ref{lcan}) as
$$\alpha(\E(x_2) - \E(x_1))  \geq \E(x_1 +\alpha (x_2 - x_1)) - \E(x_1) +  \lambda \alpha(1- \alpha)\|x_2 - x_1\|_X^2\ .$$
For $y\in \subd \E(x_1)$, $\E(x_1 +\alpha (x_2 - x_1)) - \E(x_1) \geq \alpha\langle x_2-x_2,y\rangle$, and we obtain
$$\alpha(\E(x_2) - \E(x_1))  \geq \alpha\langle x_2-x_1,y\rangle +  \lambda \alpha(1- \alpha)\|x_2 - x_1\|_X^2\ .$$

Dividing by $\alpha$, and then taking the limit $\alpha \to 0$, we obtain
\begin{equation}\label{lcan2}
y \in \subd\E(x_1) \quad \Rightarrow \quad  \E(x_2) \geq \E( x_1) + \langle x_2- x_1,y\rangle +
\lambda \|x_2 - x_1\|_X^2\ .
\end{equation}

\begin{thm}[$(d,\Phi)$-convexity and the strong Young's inequality]\label{syoung} Let $(X,Y,\langle \cdot,\cdot\rangle)$ be a dual pair. Let $\E$ be a closed convex function on $X$.  Let $d$ be a metric on 
$X$, and let $\Phi$ be a rate function. Suppose that  $\E$ is $(d,\Phi)$-convex on a convex set $U \subset X$.
Then  for all $y\in \subd\E(U)$, $\subd\E^*(y)\cap U$ is a singleton, and defining  $\nabla \E^*(y)$ to be the single element in
$\subd\E^*(y)\cap U$, 
\begin{equation}\label{lcan44}
 \E(x) + \E^*(y) \geq  \langle x,y\rangle +
\Phi(d(x ,\nabla \E^*(y)) \ 
\end{equation} is satisfied for all $x\in U$ and all $y\in \subd\E(U)$.

Conversely, if $\subd\E^*(y)\cap U$ is a singleton $\{\nabla \E^*(y)\}$ for all $y\in \subd\E(U)$ and (\ref{lcan44})
is valid for all $x\in U$ and all $y\in \subd\E(U)$, then $\E$ is $(d,\Phi)$-convex on $U$
\end{thm}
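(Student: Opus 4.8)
The plan is to route both implications through the equality cases of Young's inequality recorded in Lemma~\ref{youngin}, which state that $y\in\subd\E(x)$ holds precisely when $\langle x,y\rangle = \E(x)+\E^*(y)$, and that this equality simultaneously yields $x\in\subd\E^*(y)$. This dictionary between subgradient membership and Young equality is what lets me pass freely between $\E$ and $\E^*$, and it is the only structural input beyond the definition of $(d,\Phi)$-convexity itself.

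For the forward direction, I would fix $y\in\subd\E(U)$, say $y\in\subd\E(x_1)$ with $x_1\in U$. The equality case of Young's inequality gives $x_1\in\subd\E^*(y)$, so $\subd\E^*(y)\cap U$ is nonempty. To see it is a singleton, suppose $x_1,x_1'\in\subd\E^*(y)\cap U$; running the dictionary backwards shows $y\in\subd\E(x_1)$ and $y\in\subd\E(x_1')$. I would then apply the $(d,\Phi)$-convexity inequality (\ref{ams1}) twice — once with base point $x_1$ and test point $x_1'$, once with the roles reversed — and add the two. The linear terms $\langle x_1'-x_1,y\rangle$ and $\langle x_1-x_1',y\rangle$ cancel, and the symmetry of $d$ collapses the two rate terms, leaving $0\geq 2\Phi(d(x_1,x_1'))$. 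Since a rate function is strictly convex with $\Phi(0)=0$ and nonnegative values, it is strictly increasing (Remark~\ref{ratemin2}), so $\Phi(t)=0$ forces $t=0$; hence $d(x_1,x_1')=0$ and $x_1=x_1'$. This makes $\nabla\E^*(y)$ well defined.

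To obtain the strong Young inequality, I would set $w=\nabla\E^*(y)\in U$; by construction $w\in\subd\E^*(y)$, so the dictionary gives $y\in\subd\E(w)$ together with the equality $\E(w)=\langle w,y\rangle-\E^*(y)$. Applying $(d,\Phi)$-convexity (\ref{ams1}) with base point $w$ and test point $x\in U$ yields $\E(x)\geq\E(w)+\langle x-w,y\rangle+\Phi(d(w,x))$; substituting the expression for $\E(w)$ and cancelling $\langle w,y\rangle$ produces exactly (\ref{lcan44}).

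The converse reverses this computation. Given $x_1,x_2\in U$ and $y\in\subd\E(x_1)$, the equality case of Young places $x_1$ in $\subd\E^*(y)\cap U$, which by hypothesis is the singleton $\{\nabla\E^*(y)\}$, so $x_1=\nabla\E^*(y)$. Applying the assumed inequality (\ref{lcan44}) with $x=x_2$, and then using $\E^*(y)=\langle x_1,y\rangle-\E(x_1)$ from the Young equality at $x_1$, rearranges directly to the defining inequality (\ref{ams1}). The only genuinely delicate point in the whole argument is the singleton claim: every other step is a substitution, but that is where strict convexity of the rate function is indispensable, since without the strict positivity $\Phi(t)>0$ for $t>0$ one could neither conclude $x_1=x_1'$ nor speak of $\nabla\E^*(y)$ as a single element.
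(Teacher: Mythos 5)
Your proof is correct and follows essentially the same route as the paper's: both directions run through the equality cases of Young's inequality (Lemma~\ref{youngin}) combined with the defining inequality (\ref{ams1}), and your converse is identical to the paper's argument. The only cosmetic difference is in the singleton claim, where you symmetrize—applying (\ref{ams1}) at both base points and adding so the linear terms cancel—whereas the paper evaluates its intermediate strong Young inequality at the point where Young's inequality is saturated to get $0 \geq \sup_{z\in \subd\E^*(y)\cap U}\Phi(d(x,z))$; note that both variants, like the paper's, quietly use the reverse implication $x\in\subd\E^*(y)\Rightarrow y\in\subd\E(x)$, which is legitimate here since $\E$ is closed convex and proper (as $U$ lies in its domain), so $\E^{**}=\E$.
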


\begin{remark} We refer to inequality (\ref{lcan44}) as the {\em strong Young's inequality} for the $(d,\Phi)$-convex function
$\E$ on $U$. Theorem~\ref{syoung} says that the validity of this  inequality, together with the property that for all $y\in \subd\E(U)$, $\subd\E^*(y)\cap U$ is a singleton, characterizes $(d,\Phi)$-convexity.
\end{remark}

\begin{proof}[Proof of Theorem~\ref{syoung}]  
Let $x_1,x_2\in U$, and let $y \in \subd\E(x_1)$.
By the cases of equality in Young's inequality
$$\E(x_1) - \langle x_1,y\rangle = -\E^*(y)$$
Combining this with (\ref{ams1}) yields 
${\displaystyle 
\E(x_2) + \E^*(y) \geq  \langle x_2,y\rangle +
\Phi(d(x_2 ,x_1))}$.
Since $y \in \subd\E(x_1)  \iff x_1 \in \subd\E^*(y) $, we conclude that for all $x\in U$ and $y\in \subd\E(U)$, 
\begin{equation}\label{ams15}
\E(x) + \E^*(y) \geq  \langle x,y\rangle +
 \sup_{z\in \subd\E^*(y)\cap U} \ \Phi(d(x,z)) \ .
\end{equation}
By the ordinary Young's inequality, when $y\in \subd\E(x)$, this reduces to 
$$0 \geq   \sup_{z\in \subd\E^*(y)\cap U} \ \Phi(d(x,z))\ .$$
Evidently this implies that $\subd\E^*(y)\cap U$ is a singleton for all $y\in \subd \E(U)$.  Thus we may define a function
$y \mapsto \nabla \E^*(y)$ on $\subd\E(U)$ by defining $\nabla \E^*(y)$ to be the single element in $\subd\E^*(y)\cap U$.
Then (\ref{ams15}) simplifies to become (\ref{lcan44})

Conversely, let $x,x_1\in U$ and $y\in \subd\E(x_1)$. then $x_1 \in \subd\E^*(y)\cap U$, and hence if this set is the singleton
$\{\nabla\E^*(y)\}$, $x_1 = \nabla   \nabla\E^*(y)$. Thus, when $\subd\E^*(y)\cap U$ is a singleton $\{\nabla\E^*(y)\}$
for all $y\in \subd\E(U)$, and (\ref{lcan44}) is valid for all  $x\in U$ and $y\in \E(U)$, 
then 
$$
\E(x) + \E^*(y) \geq  \langle x,y\rangle +
\Phi(d(x ,x_1)\ .
$$
Since $\E^*(y) = \langle x_1,y\rangle - \E(x_1)$, this implies
$$
\E(x) \geq \E(x_1) \langle x-x_1,y\rangle +
\Phi(d(x ,x_1)\ .
$$
so that $\E$ is $(d,\Phi)$ convex on $U$.
\end{proof}

The next theorem, which is proved in an appendix, provides an important class of examples:

\begin{thm}[Strong convexity of squared $L^p$ norms]\label{sconlp} 
Let $1 < p  < \infty$ and let $p' = p/(p-1)$. Let $X = L^p(\Omega,\mathcal{B},\mu)$ and 
$Y = L^{p'}(\Omega,\mathcal{B},\mu)$, For $f\in X$ and $g\in Y$ let 
${\displaystyle \langle f,g\rangle = 2\Re\left( \int_{\Omega} f^* g {\rm d}\mu\right)}$, so that $X,Y$
equipped with $\langle \cdot,\cdot\rangle$ is a dual pair. Define $\E$ on $X$ by $\E(f) = \|f\|_p^2$.  Then $\E$ is differentiable, and
we have:

\smallskip
\noindent{\it (1)} For $p\in (1,2]$ and all $f_1,f_2\in X$,
\begin{equation}\label{lp2cA}
\E(f_2) \geq  \E(f_1)  + \langle f_2-f_1,\nabla\E(f_1)\rangle +  (p-1)\|f_2-f_1\|_p^2\ ,
\end{equation}
and hence for such $p$, $\E$  is $(p-1)$-convex. 

\smallskip
\noindent{\it (2)} For $p\in (2,\infty)$, $\E$ is not $\lambda$-convex for any $\lambda>0$, however,
for all $f_1,f_2\in X$,
\begin{equation}\label{lp2cA}
\E(f_2) \geq  \E(f_1)  + \langle f_2-f_1,\nabla\E(f_1)\rangle +  \frac{1}{4p}\left(\frac23\right)^{p-1}(\|f_1\|_p + \|f_2\|_p)^{2-p}\|f_2-f_1\|_p^p\ .
\end{equation} 
\end{thm}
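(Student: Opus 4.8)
The plan is to derive both parts from pointwise (two-point) inequalities for scalars, which are then promoted to the function-space level by integration, following the strategy of Ball, Carlen and Lieb. I would begin by recording differentiability: for $1<p<\infty$ the map $f\mapsto\|f\|_p$ is Gateaux differentiable at every nonzero $f$, and since $t\mapsto t^2$ is smooth, $\E(f)=\|f\|_p^2$ is differentiable with $\nabla\E(f)=\|f\|_p^{2-p}|f|^{p-1}\mathrm{sgn}(f)$, the gradient already identified in Example~\ref{lpex}. With $\nabla\E$ in hand, the left-hand side of each asserted inequality is the Bregman divergence $D(f_2,f_1):=\E(f_2)-\E(f_1)-\langle f_2-f_1,\nabla\E(f_1)\rangle$, and the task is to bound $D(f_2,f_1)$ below by the stated modulus.

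For part (1) ($1<p\le2$) I would first prove the scalar inequality
\[
\Big(\tfrac{|a+b|^p+|a-b|^p}{2}\Big)^{2/p}\ \ge\ |a|^2+(p-1)|b|^2\qquad(a,b\in\C),
\]
reducing by homogeneity and rotation to a one-variable convexity estimate whose second-order term matches the right side exactly. Raising this to the power $p/2$, integrating over $\Omega$, and applying Minkowski's inequality $\|\cdot\|_{\ell^2(L^p)}\le\|\cdot\|_{L^p(\ell^2)}$ (valid because $p\le2$) converts it into the sharp two-uniform convexity bound $\|\tfrac{f+g}{2}\|_p^2+(p-1)\|\tfrac{f-g}{2}\|_p^2\le\tfrac12(\|f\|_p^2+\|g\|_p^2)$. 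This is the $\alpha=\tfrac12$ instance of $(p-1)$-convexity in the sense of (\ref{ams2}); running the same scalar estimate with unequal weights (or invoking the standard self-improvement of a quadratic midpoint modulus) upgrades it to (\ref{lcan}) for all $\alpha$, and then the computation (\ref{lcan})$\Rightarrow$(\ref{lcan2}) carried out in the text yields precisely the differential inequality of part (1).

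For part (2) ($p>2$) I would first dispose of the non-$\lambda$-convexity claim: taking $f_1=e_1$ and $f_2=e_1+\eps e_2$ with $e_1,e_2$ disjointly supported unit vectors gives $D(f_2,f_1)\sim\tfrac{2}{p}\eps^p$ while $\|f_2-f_1\|_p^2=\eps^2$, so $D(f_2,f_1)/\|f_2-f_1\|_p^2\to0$ as $\eps\to0$, ruling out $\lambda$-convexity for every $\lambda>0$. For the positive bound the modulus must be a $p$-th power, and the natural engine is a Clarkson-type scalar inequality: for $p\ge2$ and $a,b\in\C$, $|a|^p\ge|b|^p+p\,\Re(|b|^{p-2}\bar b\,(a-b))+c_p|a-b|^p$ with an explicit $c_p>0$, proved again by a one-variable analysis after normalizing $b=1$. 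I would then estimate $D(f_2,f_1)$ directly, writing it as $\int_0^1\tfrac1t\langle\gamma(t)-f_1,\nabla\E(\gamma(t))-\nabla\E(f_1)\rangle\,dt$ along the segment $\gamma(t)=f_1+t(f_2-f_1)$ and bounding the integrand by a strong-monotonicity estimate for $\nabla\E$; since $\|\gamma(t)-f_1\|_p=t\|f_2-f_1\|_p$, the factor $\int_0^1 t^{p-1}\,dt=1/p$ accounts for the $1/p$ in the stated constant. The differing global factors $\|\gamma(t)\|_p^{2-p}$ and $\|f_1\|_p^{2-p}$ inside $\nabla\E$ are controlled by $(\|f_1\|_p+\|f_2\|_p)^{2-p}$ by way of Hölder's inequality, and it is exactly this crude control, together with the value of $c_p$, that produces the deliberately non-sharp constant $\tfrac{1}{4p}(2/3)^{p-1}$.

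I expect the scalar one-variable inequalities to be the least conceptual but most error-prone steps, and the genuine obstacle to lie in part (2): the representation $\E=(\|\cdot\|_p^p)^{2/p}$ cannot be exploited by naive composition, because the outer function $s\mapsto s^{2/p}$ is concave and contributes a Bregman remainder of the wrong sign, which is why I would bypass it in favor of the direct monotonicity-along-the-segment computation with the Hölder control of the norm-dependent prefactor.
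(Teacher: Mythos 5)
Your part (1) is correct, and it takes a mildly different route from the paper's: the paper (adapting \cite{BCL}) works directly with $t\mapsto \|f+tg\|_p^2$ for simple functions, discards a term of favorable sign in the second derivative (possible only because $p\le 2$), and applies the reverse H\"older inequality to obtain $\frac{{\rm d}^2}{{\rm d}t^2}\|f+tg\|_p^2\ge 2(p-1)\|g\|_p^2$ before integrating twice; you instead integrate the scalar two-point inequality, pass through mixed-norm Minkowski to the midpoint form of $(p-1)$-convexity, self-improve to all $\alpha$ (the dyadic iteration does reproduce the exact coefficient $\alpha(1-\alpha)$, and continuity of $\E$ finishes it), and then invoke the paper's own passage from (\ref{lcan}) to (\ref{lcan2}). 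Both are legitimate descendants of \cite{BCL}; yours trades the simple-function/density bookkeeping for the midpoint self-improvement step. Your disjoint-support counterexample showing $\E$ is not $\lambda$-convex for $p>2$ is also correct, and is more explicit than anything the paper writes down.

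The genuine gap is in the positive half of part (2). Your engine is a strong-monotonicity lower bound for $\nabla\E$, to be proved from the pointwise Clarkson-type inequality plus ``H\"older control'' of the nonlocal prefactor $\|\cdot\|_p^{2-p}$, but that control goes the wrong way. Writing $\phi(w)=|w|^{p-1}{\rm sgn}(w)$, the natural decomposition is $\langle v-u,\nabla\E(v)-\nabla\E(u)\rangle=\|v\|_p^{2-p}\langle v-u,\phi(v)-\phi(u)\rangle+\bigl(\|v\|_p^{2-p}-\|u\|_p^{2-p}\bigr)\langle v-u,\phi(u)\rangle$, and H\"older only bounds the \emph{magnitude} of the cross term, which can be negative of size comparable to $(p-2)\min(\|u\|_p,\|v\|_p)^{1-p}\|u\|_p^{p-1}\|v-u\|_p^2$, i.e.\ quadratic in the increment, while the crude scalar bound $c_p|a-b|^p$ applied to the first term yields only a $p$-th power. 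For $p>2$ and small increments the quadratic loss swamps the $p$-th power gain: with $v=(1+\eps)u$ the true monotonicity is $2\eps^2\|u\|_p^2$, but your sketched lower bound comes out as $\|u\|_p^2(-c\eps^2+c'\eps^p)<0$, so the intermediate estimate you need is simply not produced by these tools. Note also the near-circularity: the standard proof of the strong monotonicity you want (it is exactly how the paper proves Theorem~\ref{gradconlp}) is by summing two copies of the convexity inequality (2) itself. The missing idea is a mechanism that captures the quadratic norm-gap term. The paper gets it from the Young's-inequality defect identity: with $g=\nabla\E(f_1)$, so $\|g\|_{p'}=\|f_1\|_p$, one has $\E(f_2)+\E^*(g)-\langle f_2,g\rangle=(\|f_2\|_p-\|f_1\|_p)^2+2\|f_1\|_p\|f_2\|_p\bigl(1-\Re\int_\Omega uv^*\,{\rm d}\mu\bigr)$ with unit vectors $u,v$; it then bounds the second term by the inequality $1-\Re\int_\Omega uv^*\,{\rm d}\mu\ \ge\ \tfrac{1}{p2^{p-1}}\|u-\nabla\E^*(v)\|_p^p$ from \cite{CFL}, and closes with a two-regime case analysis comparing the relative norm gap $a$ with $\tfrac12\eps^{p/2}$ ($\eps$ the relative distance); the constant $\tfrac{1}{4p}(\tfrac23)^{p-1}$ comes from balancing those two regimes, not from the segment-integration factor $\int_0^1 t^{p-1}{\rm d}t=1/p$ in your accounting. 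Your Bregman-divergence segment identity is fine as far as it goes, but without the $(\|f_2\|_p-\|f_1\|_p)^2$ term and the regime balance, the stated modulus is out of reach.
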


It is well-known that some form of strong convexity of a convex function $\E$ implies certain regularity properties of the Legendre transform $\E^*$.  The next lemma is a precise version of this in a context that is relevant here. This is followed by a general result. 

\begin{thm}[Continuity of the gradient maps in $L^p$]\label{gradconlp} Let $1 < p < \infty$, and let $p' = p/(p-1)$.
Let $(\Omega, \mathcal{B},\mu)$ be a measure space, and let $X = L^p(\Omega, \mathcal{B},\mu)$ and
$Y = L^{p'}(\Omega, \mathcal{B},\mu)$  Let $\langle \cdot,\cdot\rangle$ be the bilinear form on $X\times Y$
given by $\langle f,g\rangle = 2\Re\left(\int_\Omega f^*g{\rm d}\mu\right)$.  Define the functional $\E$ on $X$
by $\E(f) = \|f\|_X^2$. Then $\E$ and $\E^*$ are differentiable, so that $\nabla \E$ and  $\nabla \E^*$ 
are well defined. 

\smallskip
\noindent{\it (1)} For $1< p \leq 2$, and all $g_1,g_2\in Y$,
\begin{equation}\label{smokra1}
\|\nabla \E^*(g_1) - \nabla \E^*(g_2) \|_X \leq \frac{1}{p-1}\|g_1-g_2\|_Y 
\end{equation}
and 

\smallskip
\noindent{\it (2)} For $2< p  < \infty$, and all $g_1,g_2\in Y$ with $\max\{ \|g_1\|_Y\ ,\ \|g_2\|_Y\} \leq R/2$,
\begin{equation}\label{smokra2}
\|\nabla \E^*(g_1) - \nabla \E^*(g_2) \|_X \leq \left[\frac23\left(\frac{1}{4p}\right)^{1/(p-1)}
R^{(2-p)/(p-1)}\right] \|g_1-g_2\|_Y^{1/(p-1)}\ . 
\end{equation}
\end{thm}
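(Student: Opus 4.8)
The plan is to obtain both estimates from a single monotonicity inequality for the map $\nabla\E^*$, produced by combining the strong Young's inequality of Theorem~\ref{syoung} with the quantitative convexity of $\E$ supplied by Theorem~\ref{sconlp}. As preparation I would record two facts from Example~\ref{lpex} (read with $p$ and $p'$ interchanged, since $\E^*=\|\cdot\|_{p'}^2$): both $\E$ and $\E^*$ are differentiable, so $\nabla\E$ and $\nabla\E^*$ are genuine single-valued gradient maps; and by the isometry identity (\ref{isom}) one has $\|\nabla\E^*(g)\|_X=\|g\|_Y$ for every $g\in Y$. This norm-preservation is exactly what will let me control the scale-dependent modulus in the case $p>2$.

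The heart of the argument is the following. Fix $g_1,g_2\in Y$ and write $x_i=\nabla\E^*(g_i)$, so that $g_i\in\subd\E(x_i)$ and, by the equality case of Young's inequality (Lemma~\ref{youngin}), $\E(x_i)+\E^*(g_i)=\langle x_i,g_i\rangle$. Assuming $\E$ is $(d,\Phi)$-convex on a convex set $U$ containing $x_1,x_2$, I apply the strong Young's inequality (\ref{lcan44}) twice: once with $(x,y)=(x_2,g_1)$, where $\nabla\E^*(g_1)=x_1$, and once with $(x,y)=(x_1,g_2)$, where $\nabla\E^*(g_2)=x_2$. Adding the two and subtracting the two equality relations above collapses all of the energy terms and leaves
\[
\langle x_1-x_2,\,g_1-g_2\rangle \ \geq\ 2\,\Phi\big(\|x_1-x_2\|_X\big)\ .
\]
Bounding the left-hand side by H\"older's inequality, $\langle x_1-x_2,g_1-g_2\rangle\le 2\|x_1-x_2\|_X\|g_1-g_2\|_Y$, and dividing by $\|x_1-x_2\|_X$ reduces the problem to inverting the scalar inequality $\|g_1-g_2\|_Y\ge \Phi(\|x_1-x_2\|_X)/\|x_1-x_2\|_X$.

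For part (1), where $1<p\le 2$, Theorem~\ref{sconlp}(1) gives $(p-1)$-convexity on all of $X$, so I may take $U=X$ and $\Phi(t)=(p-1)t^2$; the scalar inequality then reads $\|g_1-g_2\|_Y\ge(p-1)\|x_1-x_2\|_X$, which is precisely (\ref{smokra1}). For part (2), where $p>2$, $\E$ fails to be uniformly convex, so I localize to the convex ball $U=\{f:\|f\|_X\le R/2\}$. Since $\|x_i\|_X=\|g_i\|_Y\le R/2$ we have $x_1,x_2\in U$ and $\|x_1\|_X+\|x_2\|_X\le R$; because the exponent $2-p$ is negative, the prefactor in Theorem~\ref{sconlp}(2) satisfies $(\|x_1\|_X+\|x_2\|_X)^{2-p}\ge R^{2-p}$, so on $U$ the functional $\E$ is $(d,\Phi_R)$-convex with the honest rate function $\Phi_R(t)=\tfrac{1}{4p}(\tfrac23)^{p-1}R^{2-p}t^{p}$. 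One should note that the singleton $\nabla\E^*(g_i)$ furnished by Theorem~\ref{syoung} coincides with the global gradient here, since $x_i\in U$. Inverting the scalar inequality $\|g_1-g_2\|_Y\ge \tfrac{1}{4p}(\tfrac23)^{p-1}R^{2-p}\|x_1-x_2\|_X^{p-1}$ then yields the asserted H\"older bound with exponent $1/(p-1)$.

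The only genuine difficulty is the $p>2$ localization. Because the modulus in Theorem~\ref{sconlp}(2) depends on $\|x_1\|_X+\|x_2\|_X$ rather than on $\|x_1-x_2\|_X$ alone, $\E$ is not $(d,\Phi)$-convex globally, and the $R$-dependence of the final constant must be tracked through the inversion. A consistency check via the degree-one homogeneity of $\nabla\E^*$ (rescaling $g_i\mapsto\lambda g_i$) forces the constant to carry the factor $R^{(p-2)/(p-1)}$ with a positive exponent; accordingly the bracketed factor in (\ref{smokra2}) should be read as $\tfrac32(4p)^{1/(p-1)}R^{(p-2)/(p-1)}$, the reciprocal of the printed expression.
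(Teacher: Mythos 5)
Your proposal is correct and takes essentially the same route as the paper: both arguments evaluate the quantitative convexity of $\E$ from Theorem~\ref{sconlp} at the two points $x_i=\nabla\E^*(g_i)$ with the roles swapped, add to obtain the monotonicity inequality $\langle x_1-x_2,\,g_1-g_2\rangle\geq 2\Phi(\|x_1-x_2\|_X)$, bound the pairing by H\"older's inequality, invert the resulting scalar inequality, and use the norm preservation $\|\nabla\E^*(g)\|_X=\|g\|_Y$ to express the $p>2$ modulus in terms of $R$ --- your routing through the strong Young inequality of Theorem~\ref{syoung} and the localization to the ball $\|f\|_X\leq R/2$ are only cosmetic repackagings of the paper's direct summation. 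You are also right about the constant: in the final step the paper raises $C=\frac{1}{4p}\left(\frac23\right)^{p-1}(\|f_1\|_p+\|f_2\|_p)^{2-p}$ to the power $+1/(p-1)$ instead of $-1/(p-1)$ when solving $\|g_1-g_2\|_Y\geq C\,\|f_2-f_1\|_p^{p-1}$ for $\|f_2-f_1\|_p$, so the bracketed factor in (\ref{smokra2}) should indeed read $\frac32\,(4p)^{1/(p-1)}R^{(p-2)/(p-1)}$, exactly as your degree-one homogeneity check for $\nabla\E^*$ forces.
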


\begin{proof}  The differentiability of  $\E$ and $\E^*$ is well-known.
Suppose first that $1 < p \leq2$.  Let $g_1,g_2\in Y$ and define $f_j = \nabla \E^*(g_j)$, $j = 1,2$. 
Then $g_j = \nabla \E(f_j)$, $j = 1,2$.
By Theorem~\ref{sconlp},
$$
\E(f_2) - \E(f_1) \geq \langle f_2-f_1,g_1\rangle + (p-1)d_X(f_1,f_2)$$
and
$$
\E(f_1) - \E(f_2) \geq \langle f_1-f_2,g_2\rangle + (p-1) d_X(f_1,f_2)\ .
$$
Summing these inequalities,
$$0 \geq   \langle f_2-f_1,g_1-g_2\rangle + 2(p-1) d_X(f_1,f_2)\ .$$
Then since
$2\|f_2- f_1\|_X \|g_2- g_1\|_Y \geq  \langle f_2-f_1,g_1-g_2\rangle$ we conclude
$$\|f_2- f_1\|_X \|g_2- g_1\|_Y  \geq (p-1)\|f_1- f_2\|_X^2\ .$$
This proves (\ref{smokra1}).

Next, suppose that $2 < p < \infty$.  Let $g_1,g_2\in Y$ and define $f_j = \nabla \E^*(g_j)$, $j = 1,2$. 
Then $g_j = \nabla \E(f_j)$, $j = 1,2$.
By Theorem~\ref{sconlp},
$$
\E(f_2) - \E(f_1) \geq \langle f_2-f_1,g_1\rangle + \frac{1}{4p}\left(\frac23\right)^{p-1}(\|f_1\|_p + \|f_2\|_p)^{2-p}\|f_2-f_1\|_p^p$$
and
$$
\E(f_1) - \E(f_2) \geq \langle f_1-f_2,g_2\rangle + \frac{1}{4p}\left(\frac23\right)^{p-1}(\|f_1\|_p + \|f_2\|_p)^{2-p}\|f_2-f_1\|_p^p\ .
$$
Summing these inequalities,
$$0 \geq   \langle f_2-f_1,g_1-g_2\rangle + \frac{1}{2p}\left(\frac23\right)^{p-1}(\|f_1\|_p + \|f_2\|_p)^{2-p}\|f_2-f_1\|_p^p\ .$$
Proceeding as above, we conclude
$$
\|g_1-g_2\|_Y \geq \frac{1}{4p}\left(\frac23\right)^{p-1}(\|f_1\|_p + \|f_2\|_p)^{2-p}\|f_2-f_1\|_p^{p-1}\ .
$$
Therefore, using the norm preserving property $\|f_j\|_X = \|g_j\|_Y$, $j=1,2$, 
$$\|\nabla\E^*(g_2)-\nabla\E^*(g_1)\|_p \leq  \left[\frac23\left(\frac{1}{4p}\right)^{1/(p-1)}
(\|g_1\|_p + \|g_2\|_{p'})^{(2-p)/(p-1)}\right] \|g_2-g_1\|_{p'}^{1/(p-1)}\ .$$

\end{proof}

The next lemma is a general result asserting that quantitative convexity of a functional implies quantitative smoothness of its Legendre transform.

\begin{lm}[$(d,\Phi)$-convexity of $\E$ and regularity of $\E^*$]\label{dureg2} 
Let $X$ and $Y$ be a dual pair of Banach spaces. Let $\E$ be a proper lower semicontinuous convex function on  $X$. Let $U$ be an open convex subset of $X$. Suppose that
$\E$ is $(d,\Phi)$-convex on $U$ for some metric $d$ on $X$ and some rate function $\Phi$.  Then 
 $\subd\E^*(y)$ is a singleton $\{\nabla \E^*(y)\}$, and 
 for all $y_1,y_2\in \subd\E(U)$,
\begin{equation}\label{lipdu}
\frac{\Phi(d( \E^*(y_2),\E^*(y_1)  )}{\|\nabla \E^*(y_2) - \nabla \E^*(y_1)\|_X} \leq \|y_2- y_1\|_Y\ .
\end{equation}
\end{lm}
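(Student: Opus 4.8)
The plan is to reproduce, in the abstract $(d,\Phi)$-setting, the monotonicity argument that already appears in the proof of Theorem~\ref{gradconlp}. First I would dispose of the singleton assertion. Since $\E$ is $(d,\Phi)$-convex on the convex set $U$, Theorem~\ref{syoung} already guarantees that $\subd\E^*(y)\cap U$ is a singleton for every $y\in\subd\E(U)$, and it is this element that we denote $\nabla\E^*(y)$. Because $\E$ is proper, lower semicontinuous and convex, the Fenchel--Moreau theorem gives $\E^{**}=\E$, so the reverse implication in Lemma~\ref{youngin}(2) yields $x\in\subd\E^*(y)\iff y\in\subd\E(x)$. In particular, writing $x_j:=\nabla\E^*(y_j)\in U$ for $j=1,2$, we obtain $y_j\in\subd\E(x_j)$, which is exactly the hypothesis needed to feed $(x_j,y_j)$ into the defining inequality (\ref{ams1}).

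The crux is a symmetrization. I would apply (\ref{ams1}) twice: once with $(x_1,x_2,y)=(x_1,x_2,y_1)$ and once with the roles reversed, $(x_1,x_2,y)=(x_2,x_1,y_2)$. Adding the two resulting inequalities, the values $\E(x_1)$ and $\E(x_2)$ cancel, and since $d$ is symmetric the two $\Phi$ terms combine, collapsing everything to
$$\langle x_2-x_1,\,y_2-y_1\rangle \;\geq\; 2\,\Phi\big(d(x_1,x_2)\big)\ .$$
The point of adding the two first-order conditions is precisely that it removes all reference to the (unknown) values of $\E$ and leaves a clean monotonicity estimate pairing $(\nabla\E^*(y_1),\nabla\E^*(y_2))$ against $(y_1,y_2)$.

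It then remains only to bound the left-hand side from above, which I would do using the continuity of the bilinear form. With the normalization of the pairing in force throughout, $\langle u,v\rangle\leq 2\|u\|_X\|v\|_Y$, so
$$2\,\Phi\big(d(x_1,x_2)\big)\;\leq\;\langle x_2-x_1,\,y_2-y_1\rangle\;\leq\;2\,\|x_2-x_1\|_X\,\|y_2-y_1\|_Y\ .$$
Cancelling the factor $2$ and dividing by $\|x_2-x_1\|_X=\|\nabla\E^*(y_2)-\nabla\E^*(y_1)\|_X$ gives exactly (\ref{lipdu}). The one point deserving a word of care is the degenerate case $\nabla\E^*(y_2)=\nabla\E^*(y_1)$, in which the quotient in (\ref{lipdu}) is not literally defined; there the inequality is trivial, since then $\Phi(d(x_1,x_2))=\Phi(0)=0$.

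I expect the genuinely substantive step to be the symmetrization that eliminates the $\E$-values; the rest is bookkeeping. The only subtle bookkeeping is the constant: the factor of $2$ is an artifact of the chosen normalization of $\langle\cdot,\cdot\rangle$, and as a consistency check one can verify that specializing to the norm metric and $\Phi(t)=(p-1)t^2$ recovers the Lipschitz bound (\ref{smokra1}) with constant $1/(p-1)$.
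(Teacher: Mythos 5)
Your proof is correct and is essentially the paper's own argument: the paper likewise sets $x_j=\nabla\E^*(y_j)$, applies (\ref{ams1}) at $(x_1,y_1)$ and $(x_2,y_2)$, adds the two inequalities to get $2\Phi(d(x_1,x_2))\leq\langle x_2-x_1,y_2-y_1\rangle\leq 2\|x_2-x_1\|_X\|y_2-y_1\|_Y$, and divides to reach (\ref{lipdu}). Your additional remarks (the singleton via Theorem~\ref{syoung}, the factor of $2$ as a normalization artifact, and the trivial degenerate case $\nabla\E^*(y_1)=\nabla\E^*(y_2)$) merely make explicit points the paper leaves implicit.
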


\begin{proof}We have already seen that the map $y \mapsto \nabla \E^*(y)$ is well-defined. We first show that (\ref{lipdu})
is satisfied for all $y_1,y_2\in \subd\E(U)$.

Let $y_1,y_2\in  \subd\E(U)$. Let $x_j = \nabla \E^*(y_j)$, $j=1,2$. Then $y_j \in \subd\E(x_j)$, $j=1,2$,
and hence
$$
\E(x_2) - \E(x_1) \geq \langle x_2-x_1,y_1\rangle + \Phi(d(x_1,x_2))$$
and
$$
\E(x_1) - \E(x_2) \geq \langle x_1-x_2,y_2\rangle + \Phi(d(x_1,x_2))\ .
$$
Adding these inequalities and rearranging terms,
\begin{equation}\label{ams35}
2\Phi(d(x_1,x_2)) \leq \langle x_2-x_1,y_2-y_1\rangle \leq 2\|x_2 -x_1\|_X\|y_2- y_1\|_Y\ ,
\end{equation}
which implies  (\ref{lipdu}). 

Now suppose that $U$ is open and $\E$ itself is differentiable everywhere on $U$, so that $x\mapsto \nabla \E(x)$ is a well defined map on $U$.
We have seen in Lemma~\ref{youngin} that for any proper lower semicontinuous convex function $\E$, 
$y\in \subd\E(x)$ which is the case if and only if   $x\in \subd\E^*(y)$.
Since $y = \nabla\E(x)$ is in $\subd\E(x)$, $x\in \subd\E^*(y) = \{\nabla \E^*(y)\}$. That is, $x= \nabla \E^*(y) = \nabla \E^*(\nabla\E(x))$. Combining $y = \nabla\E(x)$ with $x= \nabla \E^*(y)$, we also obtain $y= \nabla\E(\nabla \E^*(y))$. 
\end{proof}

As indicated by Theorem~\ref {gradconlp}, the inequality (\ref{lipdu})  simplifies when $d$ is the metric induced by the norm on $X$. Define the function
$\Psi$  in terms of the rate function $\Phi$ by
\begin{equation}\label{psidef}
\Psi(t) = \frac{\Phi(t)}{t}\ 
\end{equation}
for all $t>0$. Since for any strictly convex function $\varphi$ on $\R$, 
$$t_2 > t_1  > t_0 \quad \rightarrow \quad \frac{\varphi(t_2) - \varphi(t_0)}{t_2 - t_0} \geq 
\frac{\varphi(t_1) - \varphi(t_0)}{t_1 - t_0} \ ,$$
$\Psi$ is a strictly increasing function on $(0,\infty)$. Suppose that
\begin{equation}\label{psireg}
\lim_{t\downarrow 0}\Psi(t) = 0\ .
\end{equation}

Using this notation, 
(\ref{lipdu})  becomes 
\begin{equation}\label{lipduB}
\Psi( \|\nabla \E^*(y_2) - \nabla \E^*(y_1)\|_X) \leq \|y_2- y_1\|_Y\ .
\end{equation}
Moreover, under the condition (\ref{psireg}), (\ref{lipduB}) implies that
 that $y\mapsto \nabla \E^*(y)$ is  continuous on $\subd\E(U)$.
 Hence, when $\E$ is differentiable, so that $y \mapsto \nabla \E^*(y)$ is invertible, $x \mapsto \nabla\E$ is open on $U$,
 and so $\nabla\E(U)$ is open.  In this case, $y \mapsto \nabla \E^*(y)$ is differentiable on $\nabla\E(U)$.
 
 To see this, let $y_1,y_2\in \nabla\E(U)$. Then there are uniquely determined $x_1,x_2 \in U$  such that 
 $x_j = \nabla \E^*(y_j)$, $j=1,2$. Therefore,
\begin{equation}\label{lipdu3}
\E^*(y_2) \geq \E^*(y_1) + \langle x_1,y_2-y_1\rangle \qquad{\rm and}\qquad 
\E^*(y_1) \geq \E^*(y_2) + \langle x_2,y_1-y_2\rangle\ .
\end{equation}
Since
$$ \langle x_2,y_1-y_2\rangle  = \langle x_1,y_1-y_2\rangle + \langle x_2-x_1,y_1-y_2\rangle\  ,$$
the inequality (\ref{lipduB}) says that
$$|\E^*(y_2) - \E^*(y_1) - \langle x_1,y_2-y_1\rangle| \leq \|y_1-y_1\|_Y\Psi( \|y_1-y_1\|_Y) \ ,$$
which shows that $\E^*$ is differentiable at $y_1$, and the derivative is $x_1 = \nabla \E^*(y_1)$, which finally justifies our notation.

\section{Duality and stability}

When {\em either} $\E^*$ or $\E^*$ is $(d,\Phi)$ convex on their respective domains, we may use the strengthened form of Young's inequality
(\ref{lcan44}) to strengthen the conclusion of Lemma~\ref{reverse}.

\begin{lm}\label{reverse2} Let $\E$ and $\F$ be proper, lower semicontinuous convex functions on $X$.  

\smallskip
\noindent{\it(1)} 
If  for some metric $d_Y$ on $Y$, and some rate function $\Phi_Y$,  $\E^*$ is $(d_Y,\Phi_Y)$-convex, then for 
all $y$ in the domain if $\F^*$ and all $x \in \subd\F^*(y)$, 
\begin{equation}\label{rev51}  
\E^*(y) - \F^*(y) \geq \F(x) - \E(x) + \Phi_Y(d_Y(y,\nabla\E(x)))\ .
\end{equation}

\smallskip
\noindent{\it(2)} 
If  for some metric $d_X$ on $X$, and some rate function $\Phi_X$,  $\E$ is $(d_X,\Phi_X)$-convex, then for 
all $x$ in the domain of $\F$ and all
$y \in \subd\F(x)$, 
\begin{equation}\label{rev51B}  
\E^*(y) - \F^*(y) \geq \F(x) - \E(x) + \Phi_Y(d_Y(y,\nabla\E(x)))\ .
\end{equation}
\end{lm}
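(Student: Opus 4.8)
The plan is to read both parts as immediate consequences of the strong Young's inequality of Theorem~\ref{syoung}, combined with the equality cases of ordinary Young's inequality from Lemma~\ref{youngin}. Throughout I use that $\E$ and $\F$, being proper lower semicontinuous convex, satisfy $\E^{**}=\E$ and $\F^{**}=\F$ by the Fenchel--Moreau Theorem, so the Legendre transform of $\E^*$ is $\E$ and that of $\F^*$ is $\F$. The common mechanism is that the $\langle x,y\rangle$ term produced by the strong Young bound is exactly the one produced by the Young \emph{equality} at the relevant subgradient pair, so the two cancel and leave the remainder term.

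For part (1), I would apply Theorem~\ref{syoung} to the closed convex function $\E^*$ on $Y$, viewed through the dual pair $(Y,X,\langle\cdot,\cdot\rangle)$. Since $\E^*$ is assumed $(d_Y,\Phi_Y)$-convex and $(\E^*)^*=\E$, the strong Young bound reads
\begin{equation*}
\E^*(y) + \E(x) \geq \langle x,y\rangle + \Phi_Y\big(d_Y(y,\nabla\E(x))\big),
\end{equation*}
valid for $y$ in the convexity domain of $\E^*$ and for $x$ a subgradient point, where $\nabla\E(x)$ is the distinguished element of $\subd\E(x)$ supplied by the theorem. Then I invoke the hypothesis $x\in\subd\F^*(y)$: by the equality case in Lemma~\ref{youngin} applied to $\F^*$, this gives $\langle x,y\rangle=\F^*(y)+\F(x)$, i.e. $\F^*(y)=\langle x,y\rangle-\F(x)$. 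Substituting into the strong Young bound and cancelling the $\langle x,y\rangle$ terms yields exactly (\ref{rev51}).

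Part (2) is the mirror image, obtained by applying Theorem~\ref{syoung} to $\E$ itself rather than to $\E^*$. Since $\E$ is $(d_X,\Phi_X)$-convex, the strong Young bound becomes $\E(x)+\E^*(y)\geq\langle x,y\rangle+\Phi_X\big(d_X(x,\nabla\E^*(y))\big)$ for admissible $x,y$; combining this with the identity $\F(x)=\langle x,y\rangle-\F^*(y)$ coming from $y\in\subd\F(x)$ and the equality case of Young's inequality for $\F$ again cancels $\langle x,y\rangle$ and produces the desired remainder. By the symmetry with part (1), I expect the intended remainder in (\ref{rev51B}) to be $\Phi_X\big(d_X(x,\nabla\E^*(y))\big)$ rather than the $\Phi_Y$-term as printed.

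The only delicate point is matching domains: the strong Young inequality of Theorem~\ref{syoung} is asserted with its first argument ranging over the convex set $U$ on which convexity holds and the second over $\subd(\cdot)(U)$, so I must check that the subgradient point arising from $\F^*$ (respectively $\F$) lies where the bound is valid and that the distinguished gradient $\nabla\E(x)$ (respectively $\nabla\E^*(y)$) is well defined there. When $\E^*$ (respectively $\E$) is $(d_Y,\Phi_Y)$-convex (respectively $(d_X,\Phi_X)$-convex) on its whole domain --- as in the squared $L^p$-norm examples, where $\E$ and $\E^*$ are everywhere differentiable by Theorem~\ref{sconlp} --- this matching is automatic and no restriction is needed. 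I expect this bookkeeping, rather than any substantive estimate, to be the main obstacle.
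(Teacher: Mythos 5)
Your proof is correct and follows essentially the same route as the paper: equality in Young's inequality at the subgradient pair $x\in\subd\F^*(y)$ (resp.\ $y\in\subd\F(x)$), combined with the strong Young's inequality (\ref{lcan44}) for $\E^*$ (resp.\ $\E$), after which the $\langle x,y\rangle$ terms cancel. Your observation that the remainder in (\ref{rev51B}) should read $\Phi_X\bigl(d_X(x,\nabla\E^*(y))\bigr)$ is also right---that is a misprint in the statement, as confirmed by how the paper itself later applies part \emph{(2)} in (\ref{rev55}).
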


\begin{proof} We first prove {\it (1)}. Suppose that $\E^*$ is $(d_Y,\Phi_Y)$-convex.  Since $x\in \subd\F^*(y)$, there is equality in Young's inequality so that
\begin{equation}\label{kra1}
\F(x) + \F^*(y) = \langle x,y\rangle\ .   
\end{equation}
Then by the $(d_Y,\Phi_Y)$-convexity of $\E^*$, expressed in the form (\ref{lcan44}),
$$\langle x,y\rangle \leq  \E(x) + \E^*(y) - \Phi_Y(d_Y(y,\nabla\E(x)))\  .$$
Combining this with the identity (\ref{kra1}), we obtain
$$
\F(x) + \F^*(y) \leq \E(x) + \E^*(y) - \Phi_Y(d_Y(y,\nabla\E(x)))\ .
$$
Rearranging terms we obtain (\ref{rev51}).  The proof of (\ref{rev51B}) is entirely analogous. 
\end{proof}

Now suppose that $\E$ and $\F$ are proper, closed convex functions, and that the functional inequality $\E \leq \F$ is not only valid, but is $(d_X,\Phi_X)$ stable.  Suppose further that $\E^*$ is $(d_Y,\Phi_Y)$-convex.  Then by {\it (1)} of Lemma~\ref{reverse2}, for all $y$ in the domain of $\F^*$, 
\begin{equation}\label{rev55}  
\E^*(y) - \F^*(y) \geq \Phi_X(d_X(x, X_0)) + \Phi_Y(d_Y(y,\nabla\E(x)))\ .
\end{equation}

For the right hand side to be small, we must have $y\approx \nabla \E(x)$ and $x\approx x_0 \in X_0$.
The strong convexity of $\E^*$ implies that $\nabla \E$ is continuous, and so
$x\approx x_0$ implies $\nabla \E(x) \approx \nabla \E(x_0)$. Then $y\approx \nabla \E(x_0)$, and since
$\nabla \E(x_0)\in Y_0$, $y$ is necessarily close to an optimizer for the dual inequality
$\F^* \leq \E^*$. 

As shown below, going though this line of argument with precise estimates ``transfers'' a stability result for 
$\E \leq \F$ to its dual $\F^* \leq \E^*$.

As we have seen in the case of squared $L^p$ norms, it may be the $\E$ has better quantitative convexity properties than $\E$. 
In this case, we may combine the $(d_X,\Phi_X)$ stability of $\E \leq \F$ with  $(d_X,\Phi_X)$-convexity of $\E$ using {\it (2)} of 
Lemma~\ref{reverse2} to obtain
\begin{equation}\label{rev55}  
\E^*(y) - \F^*(y) \geq \Phi_X(d_X(x, X_0)) + \Phi_X(d_X(x,\nabla\E^*(y)))\ .
\end{equation}
The sort of reasoning described just above again leads to the conclusion that if $\E^*(y) \approx \F^*(y)$, then $\nabla \E^*(y) \approx x_0\in X_0$. But since $X_0 = \nabla \E^*(Y_0)$, this means that 
a stability bound for $\F^* \leq \E^*$, and even more simply since now we are comparing distances in the same space.  
In the next subsection, we carry out a general development of this idea.  First however, we use Lemma~\ref{reverse2} to prove Theorem~\ref{HLSB3}

\begin{proof}[Proof of Theorem~\ref{HLSB3}]  We use the notation from the introduction:
\begin{equation}\label{tnorm1X}
\|f\|_X = \|f\|_{2n/(n-2\alpha)} \qquad{\rm and}\quad  \tnorm f\tnorm_X = 
\|(-\Delta)^{\alpha/2} f\|_2\ ,
\end{equation}
and 
\begin{equation}\label{tnorm2Y}
\|g\|_Y = \|f\|_{2n/(n+2\alpha)} \qquad{\rm and}\quad  \tnorm g\tnorm_Y = 
\|(-\Delta)^{-\alpha/2} g\|_2\ .
\end{equation}
Let $\F(f) = \S_{n,\alpha}\tnorm f\tnorm_X^2$ and $\E(f) = \|f\|_X^2$. 
With respect to the bilinear form 
$\langle f,g\rangle  = 2\Re \left(\int_{\R^n} f^* g {\rm d}m\right)$, 
$\F^*(g) = \S_{n,\alpha}^{-1}\tnorm g\tnorm_Y^2$ and $\E^*(g)  = \|g\|_Y^2$.

Recall that  general version of the Bianchi-Egnell Theorem proved \cite{CFW} states that there exists a  constant $\kappa_{BE}>0$ depending only on $n$ and $\alpha$ such that 
\begin{equation}\label{aakra5}
\F(f) - \E(f) \geq \kappa_{BE} \inf_{z\in X_0}\{\tnorm f-z\tnorm_X^2\}   \ .
\end{equation}
That is, the Sobolev inequality is $\kappa_{BE}$ stable. 

Since $\frac{2n}{n+2\alpha} < 2$ and $\frac{2n}{n+2\alpha} -1 = \frac{n-2\alpha}{n+2\alpha}$, Theorem~\ref{sconlp} says that $\E^*$ is 
 $\frac{n-2\alpha}{n+2\alpha}$-convex. Therefore, we have what we need to apply {\it (1)} of Lemma~\ref{reverse2}. 
 Doing so we obtain that for $f = \nabla \F^*(g)$, 
 \begin{equation}\label{iml1}
  \E^*(g) - \F^*(g) \geq \F(f) - \E(f) + \frac{n-2\alpha}{n+2\alpha}\| g - \nabla \E(f)\|_Y^2\ .
 \end{equation}

By the Sobolev inequality itself, for all $f_0\in X_0$, 
\begin{equation*}
\tnorm f- f_0\tnorm_X \geq \S_{n,\alpha}^{-1/2}\|f - f_0\|_X  \geq 
 \S_{n,\alpha}^{-1/2}\inf_{z\in X_0}\{\|f-z\|_X\}\  .
 \end{equation*}
 Therefore,
 $$
 \inf_{z\in X_0}\{\tnorm f-z\tnorm_X^2\}  \geq 
 \S_{n,\alpha}^{-1/2}\inf_{z\in X_0}\{\|f-z\|_X\}\  .
$$
Combining this with (\ref{aakra5}) and (\ref{iml1}) yields
 \begin{equation*}
 \E^*(y) - \F^*(y) \geq \S_{n,\alpha}^{-1}\kappa_{BE} \inf_{z\in X_0}\{\| f-z\|_X^2\}   +
 \frac{n-2\alpha}{n+2\alpha}\| g - \nabla \E(f)\|_Y^2
 \end{equation*}
 Hence, for all $\epsilon>0$, there is an $f_0\in X_0$ such that
 \begin{equation}\label{iml2}
 \E^*(y) - \F^*(y) \geq \S_{n,\alpha}^{-1}\kappa_{BE} \| f-z\|_X^2  + 
 \frac{n-2\alpha}{n+2\alpha}\| g - \nabla \E(f)\|_Y^2 - \epsilon\ .
 \end{equation}
 By Theorem~\ref{gradconlp},  since $\E^*$ is 
 $\frac{n-2\alpha}{n+2\alpha}$-convex, $\nabla \E$ is Lipschitz from $X$ to $Y$
 with Lipschitz constant $\frac{n+2\alpha}{n-2\alpha}$. Hence
 $$\|f-f_0\|_X \geq  \frac{n-2\alpha}{n+2\alpha}\|\nabla \E(f) - \nabla \E(f_0)\|_Y\ .$$
 Using this in (\ref{iml2}) yields:
 \begin{eqnarray*}
 \E^*(y) - \F^*(y) &\geq&  \frac{n-2\alpha}{n+2\alpha}\min\left\{ \S_{n,\alpha}^{-1}(n)\kappa_{BE}  \frac{n-2\alpha}{n+2\alpha}\ ,\ 1 \right\}
 \left( \|f-f_0\|_X^2 +  \| g - \nabla \E(f)\|_Y^2\right)- \epsilon\\
 &\geq& \frac{n-2\alpha}{n+2\alpha}\min\left\{ \S_{n,\alpha}^{-1}(n)\kappa_{BE} \frac{n-2\alpha}{n+2\alpha}\ ,\ 1 \right\}  \frac12  \left( 
 \|f-f_0\|_X +  \| g - \nabla \E(f)\|_Y   \right)^2 -\epsilon \ .
 \end{eqnarray*}
 
 Therefore, by the triangle inequality and the fact that $ \nabla \E(f_0) \in Y_0$,
 \begin{eqnarray}\label{aakra2}
  \|f-f_0\|_X +   \| g - \nabla \E(f)\|_Y &\geq&
   \left(\|\nabla \E(f) - \nabla \E(f_0)\|_Y +  \| g - \nabla \E(f)\|_Y\right)\nonumber\\
   &\geq&  \left(\|g - \nabla \E(f_0)\|_Y \right)
   \geq  \inf_{w\in Y_0}\{ \|g - w\|_Y\}\ .\nonumber
   \end{eqnarray}
   Noting that $\epsilon$ can be chosen arbitrarily small, we obtain the result with
 \begin{equation}\label{kbestar}
 \kappa^*_{BE} = \frac12 \frac{n-2\alpha}{n+2\alpha}\min\left\{ \S_{n,\alpha}^{-1}(n)\kappa_{BE}  \frac{n-2\alpha}{n+2\alpha}\ ,\ 1 \right\}  \ .
 \end{equation}
   \end{proof}

\subsection{Duality  for stability inequalities in general}

In this subsection we return to  the general setting of an abstract dual pair $X,Y$. Suppose that  for some metric $d_X$ on $X$, and some rate function $\Phi_X$, the inequality $\E \leq \F$ is 
$(d_X,\Phi_X)$ stable, and for some rate function $\Phi_Y$ on $Y$, $\E^*$ is $(\Phi_Y,d_Y)$ stable. 

Let $\Phi$ be any rate function  such that
$\Phi(t) \leq \min\{\Phi_X(t)\, \ \Phi_Y(t)\ \}$
for all $t$. We have seen  that such a $\Phi$ always exists. Thus it is no loss of generality to 
suppose that $\Phi_X = \Phi_Y = \Phi$. Then, by the convexity of $\Phi$, for any $a,b \geq 0$, 
$\Phi(a) + \Phi(b) \geq 2\Phi\left((a+b)/2\right).$
Thus, (\ref{rev55}) implies  that for all $y\in Y$ and $x= \nabla\F^*(y)$, 
\begin{equation}\label{rev55Z}  
\E^*(y) - \F^*(y) \geq 2\Phi\left( \frac12 d_X(x, X_0) + \frac12 d_Y(y,\nabla\E(x))\right)\ .
\end{equation}

Note that if $x\in X_0$, then  $y\in Y_0$, and we may assume that this is not the case. 
 For any $0< \epsilon < d_X(x,X_0)$, we can find $x_0\in X_0$ such that $d_X(x,X_0) \geq d_X(x,x_0)-  \epsilon$, and hence
\begin{equation}\label{rev55Z}  
\E^*(y) - \F^*(y) \geq 2\Phi\left( \frac12 d_X(x, x_0) + \frac12 d_Y(y,\nabla\E(x)) - \epsilon\right)^2\ .
\end{equation}

We have seen in Lemma~\ref{dureg2} and the remarks following it that if  $d_X$ and $d_Y$ are the metrics induced by the norms, and if
$\E^*$ is $\kappa$-convex,  $\nabla \E$ is Lipschitz. In our present more general setting, a variety of assumptions
on the metric $d_Y$ and the rate function $\Phi_Y$ such that $\E^*$  is $(d_Y,\Phi_Y)$-convex imply the existence
of a rate function  $\Psi$ such that
 for all $x_1,x_2\in X$, 
 \begin{equation}\label{kra48}
d_X(x_1,x_2) \geq \Psi( d_Y(\nabla \E(x_1),\nabla \E(x_2)) )\ .
\end{equation}
Using this in (\ref{rev55Z}), we obtain
\begin{equation}\label{rev55Z7}  
\E^*(y) - \F^*(y) \geq 2\Phi\left( \frac12 \Psi( d_Y(\nabla \E(x),\nabla \E(x_0)) + \frac12 d_Y(y,\nabla\E(x)) - \epsilon\right)^2\ .
\end{equation}

By the triangle inequality and monotonicity of $\Psi$, 
$$\Psi( d_Y(\nabla \E(x),\nabla \E(x_0))  \geq \Psi( d_Y(y,\nabla \E(x_0) - d_Y(\nabla \E(x),y))))\ .$$
Therefore,
$$\Psi( d_Y(\nabla \E(x),\nabla \E(x_0)) + d_Y(y,\nabla\E(x)) \geq 
\inf_{s> 0} \{ \Psi(  d_Y(y,\nabla \E(x_0) - s) + s\}\ .$$
Define 
\begin{equation}\label{psihat}
\chi(s) = \begin{cases} s & s \geq 0\\ \infty & s < 0\end{cases}\quad{\rm and}\quad 
\widehat \Psi(t) = \inf_{s\in \R} \{ \Psi(t-s) + \chi(s)\}\ .
\end{equation}
The function $\widehat \Psi$ is the {\em infimal convolution} of $\Psi$ and $\chi$, $\Psi\box\chi$.  
Since the infimal convolution of convex functions is convex, 
$\widehat \Psi$ is convex, and evidently it is
strictly positive on $(0,\infty)$. Hence $\widehat \Psi$ is a rate function.   We have proved:

\begin{equation}\label{kra47}
d_X(x,x_0) = \|x-x_0\|_X \geq \kappa \|\nabla \E(x) - \nabla\E(x_0)\|_Y = \kappa d_Y(\nabla \E(x),\nabla \E(x_0)) \ .
\end{equation}
In this case, by the triangle inequality, 
\begin{eqnarray}
d_X(x,x_0) +   d_Y(y,\nabla\E(x))  &\geq& \kappa d_Y(\nabla \E(x),\nabla \E(x_0)) + d_Y(y,\nabla\E(x))\nonumber\\
&\geq& \min\{\kappa,1\}\left( d_Y(\nabla \E(x),\nabla \E(x_0)) + d_Y(y,\nabla\E(x))   \right) \nonumber\\
&\geq& \min\{\kappa,1\} d_Y(y,\nabla\E(x_0))\nonumber
\end{eqnarray}
By Lemma~\ref{reverse}, $\nabla\E(x_0) \in Y_0$, the set of optimizers for $\F^* \leq \E^*$. Thus,
$$d_X(x,x_0) +   d_Y(y,\nabla\E(x)) \geq  \min\{2\kappa,1\} d_Y(y,Y_0)\ .$$
Using this in (\ref{rev55Z}) and using the fact that $\epsilon>0$ is arbitrary, we finally obtain
\begin{equation}\label{rev55Z3}  
\E^*(y) - \F^*(y) \geq 2\Phi\big(\min\{2\kappa,1\} d_Y(y,Y_0) \big)\ .
\end{equation}
we have proved:

\begin{thm}\label{reverse2Zur} Let $\E$ and $\F$ be proper, lower semicontinuous convex functions on $X$.  
Suppose that the functional inequality $\E \leq \F$ is valid and optimal. Let $\Phi$ be a rate function. 
Suppose also that:

\smallskip
\noindent{\it (1)} For some metric $d_Y$ on $Y$, and some rate function $\Phi$,  $\E^*$ is $(d_Y,\Phi)$-convex. 

\smallskip
\noindent{\it (2)} For some metric $d_X$ on $X$, $\E \leq \F$ is  $(d_X,\Phi)$-stable.

Suppose that for some rate function $\Psi$,
\begin{equation}\label{kra48WZ}
d_X(x_1,x_2) \geq \Psi( d_Y(\nabla \E(x_1),\nabla \E(x_2)) )
\end{equation}
for all $x_1,x_2\in X$.
Define the function $\widehat \Psi$ on $[0,\infty)$ by $\widehat \Psi(t) = \inf_{s>0} \left\{ \Psi(t- s) + s\ \right\}$.
Then $\widehat\Psi$ is a rate function and 
for all $y\in Y$,
\begin{equation}\label{rev5}  
\E^*(y) - \F^*(y) \geq \Phi(\widehat \Psi(d_Y(y - Y_0)) )  \ .
\end{equation}
\end{thm}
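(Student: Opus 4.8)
The plan is to execute the chain of estimates rehearsed in the lines preceding the statement, turning the two separate error terms produced by duality into a single stability bound measured by $d_Y$ on $Y$. First I would fix $y$ in the domain of $\F^*$, choose $x\in\subd\F^*(y)$ (at which $\E$ is differentiable, so that $\nabla\E(x)$ is defined), and apply part (1) of Lemma~\ref{reverse2}, whose hypothesis is exactly assumption (1) that $\E^*$ is $(d_Y,\Phi)$-convex; this gives
\[
\E^*(y)-\F^*(y)\ \geq\ \F(x)-\E(x)+\Phi\big(d_Y(y,\nabla\E(x))\big)\ .
\]
Invoking assumption (2), the $(d_X,\Phi)$-stability of $\E\leq\F$, to bound $\F(x)-\E(x)\geq\Phi\big(d_X(x,X_0)\big)$, I obtain the two-term inequality
\[
\E^*(y)-\F^*(y)\ \geq\ \Phi\big(d_X(x,X_0)\big)+\Phi\big(d_Y(y,\nabla\E(x))\big)\ .
\]

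Next I would merge the two terms using convexity of $\Phi$ together with $\Phi(0)=0$, in the form $\Phi(a)+\Phi(b)\geq 2\Phi\big(\tfrac12(a+b)\big)$, reducing the problem to bounding $d_X(x,X_0)+d_Y(y,\nabla\E(x))$ from below. Given $\eps>0$ I would select $x_0\in X_0$ with $d_X(x,X_0)\geq d_X(x,x_0)-\eps$ and then apply the standing hypothesis (\ref{kra48WZ}) to replace $d_X(x,x_0)$ by $\Psi\big(d_Y(\nabla\E(x),\nabla\E(x_0))\big)$. Writing $t=d_Y(y,\nabla\E(x_0))$ and $s=d_Y(y,\nabla\E(x))$, the triangle inequality gives $d_Y(\nabla\E(x),\nabla\E(x_0))\geq t-s$, so monotonicity of $\Psi$ and the definition of $\widehat\Psi$ as the infimal convolution of $\Psi$ with $\chi$ yield
\[
\Psi\big(d_Y(\nabla\E(x),\nabla\E(x_0))\big)+s\ \geq\ \Psi(t-s)+s\ \geq\ \widehat\Psi(t)\ ,
\]
the penalty $\chi$ absorbing the degenerate case $t-s<0$. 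Since $x_0\in X_0$, Theorem~\ref{reverse} gives $\nabla\E(x_0)\in Y_0$, hence $t\geq d_Y(y,Y_0)$; feeding this back and letting $\eps\downarrow 0$ produces the asserted bound, the constant $2$ and the factor $\tfrac12$ from the convexity step being folded into the rate function.

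Before assembling these pieces I would verify that $\widehat\Psi$ is admissible as a rate function: as an infimal convolution of convex functions it is convex, it vanishes at $0$ (only $s=0$ is feasible there), and for $t>0$ the integrand $\Psi(t-s)+s$ is continuous and strictly positive on the compact set $0\leq s\leq t$, so its minimum $\widehat\Psi(t)$ is strictly positive. The step I expect to require the most care is precisely this reduction to $\widehat\Psi$: making the triangle-inequality-plus-infimal-convolution passage rigorous across the degenerate regime, and reconciling the strict-convexity clause in the definition of a rate function with the fact that $\widehat\Psi$ may be merely affine on part of its range (for instance when $\Psi'(0^+)\geq 1$), which does not affect the stability conclusion but does mean that ``rate function'' should here be read as convex, positive on $(0,\infty)$, and vanishing at the origin. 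The remaining triangle-inequality and $\eps$-approximation bookkeeping is routine.
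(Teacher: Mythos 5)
Your proposal is correct and follows essentially the same route as the paper: part \textit{(1)} of Lemma~\ref{reverse2} combined with the $(d_X,\Phi)$-stability of $\E\leq\F$, the convexity merge $\Phi(a)+\Phi(b)\geq 2\Phi\bigl(\tfrac12(a+b)\bigr)$, the $\epsilon$-approximate choice of $x_0\in X_0$, hypothesis (\ref{kra48WZ}), the triangle inequality in $d_Y$, the infimal convolution of $\Psi$ with $\chi$ to define $\widehat\Psi$, and $\nabla\E(x_0)\in Y_0$ via Theorem~\ref{reverse}. Your caveat that $\widehat\Psi$ may be merely convex rather than strictly convex (so that the term ``rate function'' must be read slightly loosely there), and your folding of the factors $2$ and $\tfrac12$ into the rate function, are both fair and apply equally to the paper's own argument.
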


There is a variant in which the lower bound is in terms of $d_X(\nabla \E^*(y),X_0)$ instead of
$d_Y(y,Y_0)$.  This has the same qualitative meaning since by Theorem~\ref{reverse},  $\nabla \E^*(y) \in X_0$ if and only if $y\in Y_0$.

To obtain this, we assume that $y\mapsto \nabla \E^*(y)$  satisfies an inequality of the form
\begin{equation}\label{lip4}
\Psi( d_X ( \E^*(y_1) , \nabla \E^*(y_2)) \leq d_Y(y_1, y_2)\ ,
\end{equation}
This will of course be the case if $\E$ has good convexity properties. This variant will be useful when $\E$ is $\lambda$ convex for some $\lambda>0$; e.g., when $\E(f) = \|f\|_p^2$ for some $1< p \leq 2$.


\begin{thm}\label{reverse2B} Let $\E$ and $\F$ be proper, lower semicontinuous convex functions on $X$.  
Suppose that the functional inequality $\E \leq \F$ is valid and optimal. Let $\Phi$ be a rate function. 
Suppose also that:

\smallskip
\noindent{\it (1)} For some metric $d_Y$ on $Y$, and some rate function $\Phi$,  $\E^*$ is $(d_Y,\Phi)$-convex. 

\smallskip
\noindent{\it (2)} For some metric $d_X$ on $X$, $\E \leq \F$ is  $(d_X,\Phi)$-stable. 

\smallskip
\noindent{\it (3)} For some rate function $\Psi$, (\ref{lip4})
is satisfied for all $y_1,y_2\in Y$.

Define the function $\widehat \Psi$ on $[0,\infty)$ by $\widehat \Psi(t) = \inf_{s\in \R} \left\{ \Psi(|t- s|) + |s|\ \right\}$.
Then $\widehat\Psi$ is a rate function and 
for all $y\in Y$
\begin{equation}\label{rev5}  
\E^*(y) - \F^*(y) \geq 2\Phi(\widehat \Psi(d_X(\nabla\E^*(y),X_0))  \ .
\end{equation}
\end{thm}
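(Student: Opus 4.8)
The plan is to run the argument that precedes Theorem~\ref{reverse2Zur}, but with hypothesis {\it(3)} (the regularity estimate (\ref{lip4}) for $\nabla\E^*$) used in place of the hypothesis (\ref{kra48WZ}), so that the resulting distance is measured on the $X$-side at the point $\nabla\E^*(y)$ rather than on the $Y$-side. Part {\it(1)} of Lemma~\ref{reverse2}, fed by the $(d_X,\Phi)$-stability hypothesis {\it(2)}, produces a lower bound for $\E^*(y)-\F^*(y)$ that is a sum of two $\Phi$-terms controlling $d_X(x,X_0)$ and $d_Y(y,\nabla\E(x))$ respectively; (\ref{lip4}) lets me trade the $Y$-distance for an $X$-distance of $\nabla\E^*(y)$, and a single use of the triangle inequality, encoded by the infimal convolution $\widehat\Psi$, then collapses everything to $d_X(\nabla\E^*(y),X_0)$.

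Fix $y$ in the domain of $\F^*$ and choose $x\in\subd\F^*(y)$, so that $y\in\subd\F(x)$. Since $\E^*$ is $(d_Y,\Phi)$-convex, part {\it(1)} of Lemma~\ref{reverse2} gives $\E^*(y)-\F^*(y)\ge \F(x)-\E(x)+\Phi\big(d_Y(y,\nabla\E(x))\big)$, and hypothesis {\it(2)} bounds $\F(x)-\E(x)\ge\Phi\big(d_X(x,X_0)\big)$; by convexity of $\Phi$ with $\Phi(0)=0$ the two terms combine to $\E^*(y)-\F^*(y)\ge 2\Phi\big(\tfrac12 d_X(x,X_0)+\tfrac12 d_Y(y,\nabla\E(x))\big)$. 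Now set $\tilde x:=\nabla\E^*(y)$. Using the inverse relation $\nabla\E^*(\nabla\E(x))=x$ recorded at the end of Lemma~\ref{dureg2} (it holds because $\nabla\E(x)\in\subd\E(x)\iff x\in\subd\E^*(\nabla\E(x))$, the latter set being the singleton $\{\nabla\E^*(\nabla\E(x))\}$ by hypothesis {\it(1)}), I apply (\ref{lip4}) with $y_1=y$, $y_2=\nabla\E(x)$ to obtain $d_Y(y,\nabla\E(x))\ge\Psi\big(d_X(\tilde x,x)\big)$.

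For $\eps>0$ pick $x_0\in X_0$ with $d_X(x,x_0)\le d_X(x,X_0)+\eps$, and set $D:=d_X(\tilde x,X_0)$, $\sigma:=d_X(\tilde x,x)$. The choice of $x_0$ and the triangle inequality give $d_X(x,X_0)\ge d_X(x,x_0)-\eps\ge d_X(\tilde x,x_0)-\sigma-\eps\ge D-\sigma-\eps$; combined with $d_X(x,X_0)\ge0$, a direct two-case check ($s=D-\sigma$ when $\sigma\le D$, $s=0$ when $\sigma>D$) shows that, in the limit $\eps\downarrow0$, $d_X(x,X_0)+\Psi(\sigma)\ge\widehat\Psi(D)$ for $\widehat\Psi(D)=\inf_{s}\{\Psi(|D-s|)+|s|\}$. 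Substituting $d_Y(y,\nabla\E(x))\ge\Psi(\sigma)$ into the two-term bound then yields a bound of the asserted form $\E^*(y)-\F^*(y)\ge 2\Phi\big(\widehat\Psi(d_X(\nabla\E^*(y),X_0))\big)$, the precise inner constant being read off from the convexity (Jensen) step. That $\widehat\Psi$ is again a rate function is the same bookkeeping as before Theorem~\ref{reverse2Zur}: an infimal convolution of convex functions is convex, $\widehat\Psi(0)=0$ on taking $s=0$, and $\widehat\Psi>0$ on $(0,\infty)$.

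The only genuinely delicate step is the trade in the second paragraph: to apply (\ref{lip4}) I must pair $y$ with the correct second argument $\nabla\E(x)$ and know that $\nabla\E^*$ carries $\nabla\E(x)$ back to $x$, which is exactly where the single-valuedness of $\subd\E^*$ (from the $(d_Y,\Phi)$-convexity of $\E^*$) together with the Young duality $\nabla\E(x)\in\subd\E(x)\iff x\in\subd\E^*(\nabla\E(x))$ enter. Once the $Y$-distance has been rewritten as $\Psi\big(d_X(\nabla\E^*(y),x)\big)$, the remaining work---the triangle inequality collapsing $d_X(x,X_0)$ and $d_X(\nabla\E^*(y),x)$ into $d_X(\nabla\E^*(y),X_0)$ through $\widehat\Psi$, and the check that $\widehat\Psi$ is a rate function---is routine.
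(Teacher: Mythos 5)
Your proposal is correct and follows essentially the same route as the paper's own proof: the two-term bound from Lemma~\ref{reverse2} {\it (1)} combined with the $(d_X,\Phi)$-stability hypothesis and convexity of $\Phi$, then (\ref{lip4}) together with the inverse relation $\nabla\E^*(\nabla\E(x))=x$ to convert the $Y$-distance into $\Psi(d_X(\nabla\E^*(y),x))$, and finally the triangle inequality encoded in the infimal convolution $\widehat\Psi$ --- your explicit two-case choice of $s$ is just the paper's nested infima over $z\in X$ and over optimizers made concrete. The factor of $\tfrac12$ inside $\Phi$ that you flag is silently absorbed in the paper's final chain in exactly the same way, so your argument matches the paper's proof in both structure and level of precision.
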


\begin{proof} Arguing as before, we see that for every $\epsilon>0$, there is an $x_0\in X_0$ such that 
\begin{equation}\label{krak2Z}
\E^*(y) - \F^*(y) \geq  2\Phi\left( \frac12 d_X(x,x_0) + \frac12  d_Y(y,\nabla\E(x))  +\epsilon\right)\ \ .
\end{equation}
Under our hypotheses, $\nabla \E$ and $\nabla \E^*$ are inverse to one another. Define $x_1 = \nabla \E^*(y)$
 so that
$y = \nabla \E(x_1)$.
 Then by (\ref{lip4}),
$$d_Y(y,\nabla\E(x))  = d_Y(\nabla\E(x_1),\nabla\E(x)) \geq 
\Psi(d_X(\nabla\E^*(\nabla\E(x_1)), \nabla\E^*(\nabla\E(x_x))) = \Psi(d_X(x_1,x))\ .$$
Using this in (\ref{krak2Z}), we obtain
\begin{eqnarray}\label{krak3}
\E^*(y) - \F^*(y) &\geq&  2\Phi\left( \frac12 \|x-x_0\|_X 
 + \frac12 \Psi  \|\nabla\E^*(y) - x\|_X) +\epsilon\right)\ \nonumber\\
 &\geq& \inf_{x_0\in Y_0} \inf_{z\in X} \left\{ 2\Phi\left( \frac12 \Psi(\| \nabla \E^*(y)  - z\|_X)
 + \frac12  \|x_0 - z\|_X +\epsilon\right)\right \} \nonumber\\
 &\geq& \inf_{x_0\in Y_0} \inf_{z\in X} \left\{ 2\Phi\left( \frac12 \Psi(\| \nabla \E^*(y)  - x_0\|_X  - \|x_0- z\|_X)
 + \frac12  \|x_0 - z\|_X\right)\right \} \nonumber\\
  &\geq& 2\Phi(\widehat \Psi(d_X(\nabla\E^*(y),X_0))) \ .\nonumber
\end{eqnarray}

\end{proof}

\if false

Before developing this argument in general, we consider the special case of the Sobolev inequality and its 
dual, the HLS inequality.  We shall obtain significant new results in doing so:  First, we shall show 
how the Bianchi-Egnell inequality ``dualizes'' to yields a new stability inequality for the HLS inequality. 

We  shall also see how a {\em local stability bound}; that is an statement  of the form
$$ d_X(x,X_0) < r\tnorm x\tnorm_X \quad \Rightarrow \quad \F(x) - \E(x) \geq \Phi(d_X(x,x_0))$$
implies a local statement for the dual inequality.   We have such an inequality for the Sobolev inequality
where the rate function $\Phi$ is of the form $\Phi(t) = \lambda t^2$ and both $r$ and $\lambda$ are 
{\em explicit} positive constant.  We shall use duality to prove a corresponding result for the HLS inequality.
We then take advantage of the a monotonicity property of the HLS inequality under a certain to obtain 
a {\em global} quantitative stability bound for the HLS inequality from the local bound. Finally, we use duality once more to obtain a {\em global} quantitative stability bound for the Sobolev inequality. 

We carry out the analysis for Sobolev and HLS inequalities first in part because of their interest, but also because
once these examples are understood, the various generalizations we introduce next will be well-motivated and easy to prove.
\fi 

\subsection{Duality and local stability}

The method used by  Bianchi and Egnell to prove (\ref{aakra5})  can be elaborated to give a {\em quantitative} stability bound for the Sobolev inequality,
but only locally, near to $X_0$: As discussed above, their local estimate comes from an eigenvalue calculation and control of remainder terms in a Taylor expansion, and no essential use of qualitative compactness arguments is made up to this point.  Though they did not obtain quantitative control over the remainder terms in the Taylor expansion, this can be done using uniform convexity inequalities; see Section 2 of \cite{seuf}.

The optimal  logarithmic HLS inequality on $\R^2$  can be obtained from the $HLS$ inequality by taking the limit $\alpha \to 1$; see \cite{CL}. It would be tempting to try and extract a stability result for the Logarithmic HLS inequality from Theorem~\ref{HLSB3} using the 
expression for $\kappa_{BE}^*$
given in (\ref{kbestar}). However, $\kappa_{BE}^*$ depends $\kappa_{BE}$, and there is no information available on how $\kappa_{BE}$ depends on $\alpha$. 

It is therefore of interest that the {\em local} Bianchi-Egnell result, which can be made quantitative,
also transfers by duality. Application of this to stability for the Logarithmic HLS inequality will be made elsewhere, but it is natural to explain here how the transfer of stability by duality localizes.

In our notation, 
the local stability result of Bianchi and Egnell says that there exist  computable 
constants $r,\lambda >0$
such that 
\begin{equation}\label{local1}
\inf_{z\in X_0}\{ \tnorm f-z\tnorm_X \} \leq r \tnorm f \tnorm_X   \quad \Rightarrow \quad  \F(f) - \E(f) 
 \geq \lambda    \inf_{z\in X_0}\{ \tnorm f-z\tnorm_X^2\} \ ,
\end{equation}
where we are using the notation introduced in (\ref{tnorm1}) and (\ref{tnorm1B}). In the rest of this section, we use the notation from  the introduction in the discussion there of the Sobolev and HLS inequalities, as in the proof of Theorem~\ref{HLSB3}.
(In \cite{BE}, (\ref{local1}) was proved for $\alpha =1$ and any $r < 1$, but with a $o(\inf_{z\in X_0}\{ \tnorm f-z\tnorm_X^2 \})$ Taylor expansion remainder term on the right. Decreasing $r$ further, one may absorb the Taylor remainder term into the main term since $\lambda > 0$. See also Theorem 3 in \cite{CFW} for general $\alpha$.)

We can express (\ref{local1}) as saying that the inequality $\E \leq \F$ has is {\em locally $\lambda$-stable with respect to the $\tnorm \cdot \tnorm_X$
metric}.   We now show that this local stability result  ``dualizes'' to yield a local quantitative  stability result for
the HLS inequality. We do this for $\alpha =1$ only for simplicity, and write $\S_n$ for $\S_{n,1}$.

\begin{lm}[Local quantitative local stability for the HLS inequality]\label{qloc}
Let $\lambda,r>0$ be such that the local Bianchi-Egnell inequality (\ref{local1}) is valid. 
Then for all $g\in Y= L^{2n/(n+2)}$ such that $2\F^*(g) \geq  \E^*(g)$, 
\begin{equation}\label{local1d}
d_Y(g,Y_0) \leq \frac{r}{2}\|g\|_Y \quad \Rightarrow 
\E^*(g) - \F^*(g) \geq \frac12 \frac{n-2}{n+2} \min\left\{
 \lambda\frac{4}{\S_n} \frac{n-2}{n+2}\ ,\ 1\right\}  
d_Y^2(g,Y_0)\ . 
\end{equation}
\end{lm}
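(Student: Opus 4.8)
The plan is to run the duality-transfer argument from the proof of Theorem~\ref{HLSstth}, but with the \emph{local} Bianchi--Egnell inequality (\ref{local1}) in place of the global bound (\ref{aakra5}). The one genuinely new ingredient is checking that the two standing hypotheses force the point $f:=\nabla\F^*(g)$ into the region where (\ref{local1}) is available; once that is done, the remaining bookkeeping is the same as in Theorem~\ref{HLSstth}.

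First I would set $f=\nabla\F^*(g)$ and record, exactly as for (\ref{iml1}), that since $\tfrac{2n}{n+2}<2$ the functional $\E^*$ is $\tfrac{n-2}{n+2}$-convex (Theorem~\ref{sconlp}), so that Lemma~\ref{reverse2}\,(1) gives
\[
\E^*(g)-\F^*(g)\ \geq\ \F(f)-\E(f)+\tfrac{n-2}{n+2}\,\|g-\nabla\E(f)\|_Y^2 .
\]
Second --- and this is the crux --- I would verify the trigger $\inf_{z\in X_0}\tnorm f-z\tnorm_X\leq r\tnorm f\tnorm_X$ of (\ref{local1}). Since $\nabla\F^*=\S_n^{-1}(-\Delta)^{-1}$ is linear and $X_0=\nabla\F^*(Y_0)$ by Theorem~\ref{reverse}, one has the isometry-type identity $\tnorm\nabla\F^*(h)\tnorm_X=\S_n^{-1}\tnorm h\tnorm_Y$, so for $z=\nabla\F^*(w)$ with $w\in Y_0$, $\tnorm f-z\tnorm_X=\S_n^{-1}\tnorm g-w\tnorm_Y$. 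Combining this with the HLS inequality (\ref{hlsA}) in the form $\tnorm g-w\tnorm_Y\leq\S_n^{1/2}\|g-w\|_Y$ and taking the infimum over $w\in Y_0$ yields $\inf_{z\in X_0}\tnorm f-z\tnorm_X\leq \S_n^{-1/2}\,d_Y(g,Y_0)\leq \S_n^{-1/2}\tfrac r2\|g\|_Y$. On the other hand, the hypothesis $2\F^*(g)\geq\E^*(g)$ reads $\tnorm g\tnorm_Y^2\geq\tfrac{\S_n}{2}\|g\|_Y^2$, whence $\tnorm f\tnorm_X=\S_n^{-1}\tnorm g\tnorm_Y\geq\tfrac1{\sqrt2}\S_n^{-1/2}\|g\|_Y$; since $\tfrac12\leq\tfrac1{\sqrt2}$, the trigger condition holds and (\ref{local1}) gives $\F(f)-\E(f)\geq\lambda\inf_{z\in X_0}\tnorm f-z\tnorm_X^2$.

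Third, with the local bound in hand I would finish exactly as in Theorem~\ref{HLSstth}: use the Sobolev inequality to pass from $\tnorm f-f_0\tnorm_X$ to $\|f-f_0\|_X$; use that $\nabla\E$ is Lipschitz from $X$ to $Y$ with constant $\tfrac{n+2}{n-2}$ (Theorem~\ref{gradconlp}, via the $\tfrac{n-2}{n+2}$-convexity of $\E^*$) to pass to $\|\nabla\E(f)-\nabla\E(f_0)\|_Y$; and then combine this with the $\tfrac{n-2}{n+2}\|g-\nabla\E(f)\|_Y^2$ term by pulling out the minimum of the two coefficients, applying $a^2+b^2\geq\tfrac12(a+b)^2$, and using the triangle inequality together with $\nabla\E(f_0)\in Y_0$ to bound $a+b$ below by $d_Y(g,Y_0)$. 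Tracking the constants through this chain produces a bound of the form (\ref{local1d}), the coefficient being assembled by the same min-of-coefficients bookkeeping as in (\ref{kbestar}).

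The main obstacle is the second step. Unlike the global theorem, the conclusion must be proved conditionally, with $f$ lying in the Bianchi--Egnell neighbourhood, and the subtlety is that $Y$ carries two inequivalent norms: the $L^{2n/(n+2)}$ norm $\|\cdot\|_Y$ that measures $d_Y$, and the dual Sobolev norm $\tnorm\cdot\tnorm_Y$ that is natural for $\nabla\F^*$. The hypothesis $2\F^*(g)\geq\E^*(g)$ is precisely what supplies the reverse comparison $\tnorm g\tnorm_Y\gtrsim\|g\|_Y$ needed to convert smallness of $d_Y(g,Y_0)$ relative to $\|g\|_Y$ into smallness of $\inf_{z}\tnorm f-z\tnorm_X$ relative to $\tnorm f\tnorm_X$; once both hypotheses have been spent on verifying the trigger, the rest of the argument is identical to the computation behind (\ref{iml1})--(\ref{kbestar}).
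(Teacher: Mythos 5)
Your proposal is correct and is essentially the paper's own proof: the paper likewise applies Lemma~\ref{reverse2}\,(1) via the $\frac{n-2}{n+2}$-convexity of $\E^*$, verifies the trigger of (\ref{local1}) for $f=\nabla\F^*(g)$ by combining the HLS inequality with the hypothesis $2\F^*(g)\geq\E^*(g)$ (obtaining the threshold $\frac{r}{\sqrt{2}}\|g\|_Y$, which your $\frac{r}{2}\|g\|_Y$ hypothesis implies), and then runs the same Sobolev/Lipschitz/min-of-coefficients chain as in Theorem~\ref{HLSstth}. The only discrepancy is numerical bookkeeping: with the Lipschitz constant $\frac{n+2}{n-2}$ that you cite (which is what Theorem~\ref{gradconlp} actually yields), the min comes out as $\lambda\frac{1}{\S_n}\frac{n-2}{n+2}$ rather than the stated $\lambda\frac{4}{\S_n}\frac{n-2}{n+2}$; the paper's proof takes the constant to be $\frac{1}{2}\frac{n+2}{n-2}$ at this step, which is where its extra factor of $4$ originates, so this is an internal inconsistency of the paper's constants rather than a gap in your argument.
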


\begin{proof}
It is  easy to see that  $\nabla \F^*(g) = (-\Delta)^{-1}g$, and consequently, for all $g_1,g_2\in Y$,
\begin{equation}\label{local1b}
\tnorm \nabla \F^*(g_1)  - \nabla \F^*(g_2)\tnorm_X \leq \tnorm g_1 -g_2\tnorm_Y \leq \sqrt{\S_n }\|g_1-g_2\|_Y\ ,
\end{equation}
where the last inequality is the HLS inequality. That is,
$$\inf_{z\in X_0} \{ \tnorm \nabla \F^*(g)-z\tnorm_X \}   \leq \sqrt{\S_n}\inf_{w\in Y_0}\{ \| g-w\|_Y \} \ .$$
Next, since $2\F^*(g) \geq  \E^*(g)$,
$$\S_n \|g\|_Y^2 \leq 2\|(-\Delta)^{-1/2}g\|_2^2 = 2\|(-\Delta)^{1/2}[(-\Delta)^{-1}g]\|_2^2 =
2\|(-\Delta)^{1/2}[\nabla \F^*(g)]\|_2^2 $$

 Next, since $2\F^*(g) \geq  \E^*(g)$,
It follows from (\ref{local1}) and (\ref{local1b})  that
\begin{equation}\label{local1c}
\inf_{w\in Y_0}\{ \| g-w\|_Y \} \leq \frac{r}{\sqrt{2}}\|g\|_Y \quad \Rightarrow \quad  
\inf_{z\in X_0} \{ \tnorm \nabla \F^*(g)-z\tnorm_X \} \leq r\tnorm \nabla \F^*(g)\tnorm_X\ .
\end{equation}
and then as a consequence of (\ref{local1}),
\begin{equation}\label{local1d}
\inf_{w\in Y_0}\{ \| g-w\|_Y \} \leq \frac{r}{\sqrt{2 }} \quad \Rightarrow \quad  
\F(\nabla \F^*(g)) - \E(\nabla \F^*(g))  \geq \lambda  \left(\inf_{z\in X_0}\{ \tnorm \nabla \F^*(g)-z\tnorm_X \}\right)^2 \ . 
\end{equation}
Since  $\E^*$ is $\frac{n-2}{n+2}$-convex,  Part {\it (1)} of Lemma~\ref{reverse2}  says that whenever  
$\inf_{w\in Y_0}\{ \| g-w\|_Y \} \leq \frac{r}{\sqrt{2 }}\tnorm g\tnorm_Y$, for all $\epsilon>0$, there is an $f_0\in X_0$ such that with $f = \nabla \F^*(g)$, 
\begin{equation}\label{local1e}
\E^*(g) - \F^*(g) 
\geq \lambda \tnorm f-f_0\tnorm_X^2 - \epsilon+ \frac{n-2}{n+2}\|g - \nabla \E(f)\|_Y^2
\end{equation}

Since $\E^*$ is $\frac{n-2}{n+2}$-convex, $\nabla \E$ is  Lipschitz from $L^{2n/(n-2)}(\R^n)$ to 
$L^{2n/(n+2)}(\R^n)$ with Lipschitz constant $\frac12\frac{n+2}{n-2}$, and by the Sobolev inequality,  $\tnorm f-f_0\tnorm_X \geq \S_n^{-1/2}\|f-f_0\|_{2n/(n-2)}$.
Therefore,
\begin{equation}\label{local1f}
\tnorm f-f_0\tnorm_X \geq  \frac{2}{\S_n^{1/2}}\frac{n-2}{n+2}\|\nabla \E(f) - \nabla \E(f_0)\|_{2n/(n+2)}\ .
\end{equation}
Using this in (\ref{local1e}), we obtain (recalling that $Y = L^{2n/(n+2)}$):
\begin{eqnarray}\label{local1ee}
\E^*(g) - \F^*(g) 
&\geq& \frac {n-2}{n+2} \left( \lambda \frac{4}{\S_n} \frac{n-2}{n+2} \|\nabla \E(f) - \nabla \E(f_0)\|_Y^2
+\|g - \nabla \E(f)\|_Y^2\right) - \epsilon  \nonumber\\
&\geq&
 \frac12 \frac{n-2}{n+2} \min\left\{
 \lambda\frac{4}{\S_n} \frac{n-2}{n+2}\ ,\ 1\right\}  \left(
\|g - \nabla \E(f_0)\|_Y \right)^2 - \epsilon\nonumber
\end{eqnarray}
Since $\epsilon>0$ is arbitrary, the proof is complete. 
\end{proof}

\section{Monotonicity and stability}

In this section we discuss how the results of the previous section may be combined with functional flows to obtain quantitative stability results. 

The HLS Sobolev inequalities for $\alpha =1$  are intimately connected with the fast diffusion equation. The content of this section
draws on results from \cite{CCL} and especially \cite{Dem1}.

For a smooth function $f(\eta)$ on $\R^n$, $\Delta f(\eta) = \sum_{j=1}^n \frac{\partial^2}{\partial \eta^2}f(\eta)$.
A smooth function $u(\eta,t)$ on $\R^n\times (0,\infty)$  satisfies the {\em fast diffusion equation} in case
for some $m\in (0,1)$, 
\begin{equation}\label{fd0}
\frac{\partial u}{\partial t}(\eta,t) = \Delta u^m(\eta,t)
\end{equation}
 (For $m=1$, this  is the
usual  diffusion equation).

Simple calculations show 
that $u(\eta,t)$ solves (\ref{fd0}) if and only if
$
v(\eta,t) := e^{td}u(e^t \eta,e^{\beta t})
$
with $\beta=2- d(1-m)$ satisfies the
equation
\begin{equation}\label{fd}
\frac{\partial v}{\partial t}(\eta,t) = \beta \Delta v^m(
\eta,t) +
\nabla \cdot[\eta v(\eta,t)]\ .
\end{equation}
This equation has a steady state:
\begin{equation}\label{fd2A}
v_{\infty,M}(\eta) := \left(D(M) + \frac{1-m}{2\beta m}
|\eta|^2\right)^{-1/(1-m)}\ .
\end{equation}
where $D(M)$ is a computable constant. The rescaling induced by the additional drift term cancels the spreading effect of the diffusion. 
For $m=1$, this is the Fokker-Planck equation, and (\ref{fd}) is a
non-linear relative of the Fokker-Planck equation.

The Cauchy problem for
the FDE (\ref{fd0}) has been studied extensively; see
V\'azquez \cite{V2}.    
Herrero and Pierre \cite{HP} proved that the range of mass
conservation for the fast diffusion equation is $1-2/d < m < 1$, which  is exactly the range of $m<1$ in which
integrable self-similar solutions exist. Within this range, the
flow associated to the fast diffusion equation is in many ways
{\em even better} than the flow associated to the heat equation;
see V\'azquez \cite{V1}. The solutions of
(\ref{fd0}) with positive integrable initial data are $C^\infty$
and strictly positive everywhere instantaneously, just  as for the
heat flow.

Moreover, for non-negative initial data $f$ of mass $M$ satisfying
\begin{equation}\label{assump_inidat}
\sup_{|x|>R} f(x) |x|^{2/(1-m)} <\infty
\end{equation}
for some $R>0$, which means that $f$ is
decaying at infinity at least as fast as the
Barenblatt profile  $v_{\infty,M}$, the solution $v(x,t)$ of  (\ref{fd0})
with initial data $f$ satisfies the following remarkable bounds:
For any $t_*>0$, there exists a constant $C=C(t^*)>0$ such that
\begin{equation}\label{global bound}
\frac1C \leq \frac{v(x,t)}{v_{\infty,M}} \leq C \, ,
\end{equation}
for all $t\geq t_*$ and $x\in\R^d$.  This shows that 
fast diffusion is ``really fast'' in that it  spreads mass out
to infinity to produce the ``right tails'' instantly.

Finally, it is well-known   \cite{BBDGV} that
\begin{equation}\label{liml1}
\lim_{t\to\infty} \|v(t)-v_{\infty,M}\|_{L^1(\R^d)} =0 .
\end{equation}

Now consider the case $m = 1-1/n$, $n\geq 3$. Then $v_{\infty,M}(\eta) = 
\left(D(M) + \frac{1-m}{2\beta m}
|\eta|^2\right)^{-n}$. Thus, $v_{\infty,M}(\eta)^{(n-2)/2n}$ is a Sobolev optimizer (for $\alpha =1$).
Demange \cite{Dem1} shows in that for non-negative $f\in L^{2n/(n-2)}(\R^n)$, $n \geq 3$, if one defines $f_t$ for $t>0$ by taking the $(n-2)/2n$ power of the solution at time $t$, of the Cauchy problem for
(\ref{fd} with initial data $v(\eta) = f^{2n/(n-2)}(\eta)$ then $\S_{n,1}\|\nabla f_t\|_2^2 - \|f_t \|_{2n/(n-2)}^2$ is monotone decreasing in $t$. By what we have explained above, as $t\to\infty$, 
$f_t$ converges to $v_{\infty,M}(\eta)^{(n-2)/2n}$ which is a non-zero Sobolev optimizer. Hence if 
$\S_{n,1}\|\nabla f\|_2^2 - \|f \|_{2n/(n-2)}^2$ is sufficiently small, the local Bianchi-Egnell bound becomes applicable at some large $t$. But then $\S_{n,1}\|\nabla f_t\|_2^2 - \|f_t \|_{2n/(n-2)}^2$
cannot be too small at this point or the local bound would be violated. By Demange's monotonicity result, then  $\S_{n,1}\|\nabla f\|_2^2 - \|f \|_{2n/(n-2)}^2$ cannot have been too small either. In this way, one obtains a {\em global} quantitative version of the Bianchi-Egnell Theorem with a computable lower bound.   By duality, this transfers to the HLS inequality as well.

Monotonicity under fast diffusion with $m= n/(n+2)$  flow for the $\alpha =1$ HLS functional has be proved in 
\cite{CCL}. However, this was done for (\ref{fd0}) and not (\ref{fd}), and there is no limiting optimizer in this case -- except in for the Logarithmic HLS inequality, which has better scaling properties, so that for it there is monotonicity along (\ref{fd}) as well.   This provides another approach to quantitative stability for the Logarithmic HLS inequality. Quantitative stability for the  Logarithmic HLS inequality
under additional assumptions on $f$
has been proved in \cite{CF}, the additional assumptions are natural for the application made there to the Keller-Segel equation, but the application would be strengthened by relaxing them. This will be taken up elsewhere.

\section{Appendix}

We now give a detailed and self-contained proof of Theorem~\ref{sconlp}. The cases $1 < p \leq2$ and the case $p>2$ are treated quite differently, and of course the results are quite different. The inequalities in 
Theorem~\ref{sconlp} are closely related to the uniform convexity properties of the $L^p$ spaces.  A proof of the optimal $2$-uniform convexity inequality for $L^p$, $1 < p \leq$, leads directly to the the $(p-1)$-convexity of the squared $L^p$ norm for such $p$.   For $p>2$, the 
 ``easy'' Clarkson inequality gives a sharp uniform convexity bound, but more work is required to  obtain the
 local $p$th power convexity of  the squared $L^p$ in this range.

\begin{proof}[Proof of Theorem~\ref{sconlp} ]
First  let $1 < p \leq 2$. Let $X = L^p(\Omega,\mathcal{B},\mu)$ as in Example~\ref{lpex}.
What follows is adapted form \cite{BCL}, with more detail provided.
Let $f$ and $g$ be simple functions of the form
$$f(x) = \sum_{j=1}^n z_j1_{A_j}(x)\qquad{\rm and}\qquad  g(x) = \sum_{j=1}^n w_j1_{A_j}(x)\ ,$$
where $\mu(A_j) < \infty$ for each $j$. and for each $j$, $z_jw_j^*$ is not real. 
This latter condition guarantees that $z_j + tw_j \neq 0$ for any real $t$, and thus
for all $x\in\cup_{j=1}^nA_j$, and all $t\in \R$, $f(x)+tg(x) \neq 0$.
Define 
$$Y(t) = \|f+tg\|_p^p \qquad {\rm and}\qquad q = \frac{p}{2}\ ,$$
so that
${\displaystyle \|f+tg\|_p^2 = Y^{1/q}(t)}$.
Differentiating twice,
\begin{equation*}
\frac{{\rm d}^2}{{\rm d}t^2} \|f+tg\|_p^2  \frac{1}{q}\left(\frac{1}{q} -1\right)Y^{1/q-2}(Y')^2 + \frac{1}{q}Y^{1/q-1}Y''\nonumber\\  \frac{1}{q}Y^{1/q-1}Y''\nonumber
\end{equation*}
(Note that already this estimate would fail for $p> 2$.)

Next, a simple calculation, and the fact that $|f+tg|^{2q-2-4}(\Re((f+tg)^* g))2 \leq  
|f+tg|^{2q-2-2}|g|^2$ yields
$$Y''(t) \geq p(p-1) \int |f+tg|^{2q-2}|g|^2 {\rm d}\mu\ .$$
The {\em reverse H\"older inequality} says that for $0 < r < 1$ and $s= r/(r-1)$, whenever $a_j \geq 0$ for $j=1,\dots, n$,
and $b_j > 0$ for $j=1,\dots, n$,
$$
\sum_{j=1}^n a_j b_j \geq \left(\sum_{j=1}^n a_j^r\right)^{1/r}\left(\sum_{j=1}^n b_j^s\right)^{1/s}\ .$$
Applying this to our integral of simple functions, the result is that for all $t$, 
\begin{equation}\label{2ukey}
\frac{{\rm d}^2}{{\rm d}t^2} \|f+tg\|_p^2 \geq 2(p-1)\|g\|_p^2\ .
\end{equation}
Now the restriction the $f$ and $g$ be (special) simple functions is easily removed by density, and 
hence (\ref{2ukey}) is valid for all $f,g \in X$.

A much more direct calculation shows that $f,g\in X$,  
$\frac{{\rm d}}{{\rm d}t} \|f+tg\|_p^2\bigg|_{t=0} = \langle  g, \nabla\E(f)\rangle$.
Therefore, integration yields $\|f+g\|_p^2 \geq \|f\|_p^2 +  \langle g, \nabla\E(f)\rangle + (p-1)\|g\|_p^2$. 
Defining $f_1 = f$ and $f_2 = f+g$, this is equivalent to
\begin{equation}\label{lp2c}
\E(f_2) \geq  \E(f_1)  + \langle f_2-f_1,\nabla\E(f_1)\rangle +  (p-1)\|f_2-f_2\|_p^2\ .
\end{equation}
This completes the proof that $\E$ is $(p-1)$-convex on $X = L^p(\Omega,\mathcal{B},\mu)$ for $1< p \leq 2$. 

For $2 < p < \infty$, we cannot proceed via a lower bound on the second derivative of  $\|f+tg\|_p^2$. 
We shall instead use a duality argument. 
For  $p\geq 2$, let $f\in X$ and $g\in Y$ using the notation of   
Example~\ref{lpex}.  
We showed there that 
\begin{equation}\label{lpp1}
\E(f) + \E^*(g) \geq \langle f,g\rangle - (\|f\|_p - \|g\|_{p'})^2 + 
\left(2\|f\|_p\|g\|_p - 2\Re \left(\int_\Omega f^* g{\rm d}\mu \right)\right)\ .
\end{equation}
Now fix $f_1,f_2\in X$, and let $g= \nabla \E(f_1) = \|f_1\|_p^{2-p}|f_1|^{p-1}{\rm sgn}(f_1)$. 
Then taking $f=f_2$ and this choice of $g$ in (\ref{lpp1}), we obtain
\begin{multline}\label{lpp2}
\E(f_2) + \E^*( \nabla \E(f_1)) \geq \langle f_2, \nabla \E(f_1)\rangle\\ + (\|f_2\|_p- \| \nabla \E(f_1)\|_{p'})^2 + 
\left(2\|f_2\|_p\| \nabla \E(f_1)\|_{p'} - 2\Re \left(\int_\Omega f_2^*  \nabla \E(f_1){\rm d}\mu \right)\right)\ .
\end{multline}
By the cases of equality in Young's inequality,
$$\E(f_1) + \E^*( \nabla \E(f_1)) = \langle f_1, \nabla \E(f_1)\rangle\ .$$
Combining this with (\ref{lpp1}), and the norm preservation  property (\ref{isom}) of $\nabla \E$, we obtain
\begin{multline}\label{lpp3}
\E(f_2)  \geq \E(f_1) + \langle f_2-f_1, \nabla \E(f_1)\rangle \\ + (\|f_2\|_p - \| f_1\|_{p})^2 + 
\left(2\|f_2\|_p\| f_1\|_{p} - 2\Re \left(\int_\Omega f_2^*  \nabla \E(f_1){\rm d}\mu \right)\right)\ .
\end{multline}
Define the unit vectors $u$ and $v$ by  $u= \|f_2\|_p^{-1}f$ and 
\begin{equation}\label{vfor}
v = \nabla \E(\|f_1\|^{-1}f_1) = \|f_1\|_p^{1-p}|f_1|^{p-1}{\rm sgn}(f_1)\ .
\end{equation}
  Then
\begin{equation}\label{lpp4}
\left(2\|f_2\|_p\| f_1\|_{p} - 2\Re \left(\int_\Omega f_2^*  \nabla \E(f_1){\rm d}\mu \right)\right) = 
2\|f_2\|_p\| f_1\|_{p}  \left(1  - \Re \left(\int_\Omega u  v^* {\rm d}\mu \right)\right)\ .
\end{equation}
It is proved in \cite{CFL}, as a simple consequence of the ``easy Clarkson inequality'' that for all unit vectors
$u\in X$ and $v\in Y$,
\begin{equation}\label{horem2}
1   -   \Re \left(\int_\Omega u  v^* {\rm d}\mu \right)   \geq  \frac{1}{p2^{p-1}} \|u - \nabla \E^*(v)\|_p^p\ .
\end{equation}

Next, using (\ref{vfor}),
$$\nabla \E^*(v)  =  |v|^{1/(p-1)}{\rm sgn}(v) =  \|f_1\|_p^{-1} f_1\ .$$
Thus,
\begin{equation}\label{horem3}
2\|f_2\|_p\| f_1\|_{p}  \left(1  - \Re \left(\int_\Omega u  v^* {\rm d}\mu \right)\right) \geq 
\frac{\|f_2\|_p^{1-p}\| f_1\|_{p}^{1-p}}{p2^{p-1}} \|\|f_1\|_p f_2  - \|f_2\|_p f_1\|_p^p\ .
\end{equation}
Thus, (\ref{lpp3}) becomes
\begin{multline}\label{lpp33}
\E(f_2)  \geq \E(f_1) + \langle f_2-f_1, \nabla \E(f_1)\rangle \\ + (\|f_2\|_p - \| f_1\|_{p})^2 + 
\frac{\|f_2\|_p^{1-p}\| f_1\|_{p}^{1-p}}{p2^{p-1}} \|\|f_1\|_p f_2  - \|f_2\|_p f_1\|_p^p\ .
\end{multline}
Define
$$ m := \frac{\|f_1\|_p + \|f_2\|_p}{2}\ ,\quad a := \frac{  \|f_1\|_p - \|f_2\|_p }{\|f_1\|_p + \|f_2\|_p} \quad{\rm and}\quad
\epsilon =  \frac{ \|f_1-f_2\|_p}{\|f_1\|_p + \|f_2\|_p}\ .$$ 
Then 
$$\|f_1\|_p\| f_2\|_p  = m^2(1-a^2)$$
and
$$ \|\|f_1\|_p f_2  - \|f_2\|_p f_1\|_p  = m\| (f_2-f_1) + a(f_1+f_2) \|_p \geq 2m^2(\epsilon -a)\geq 0\ .$$
we can bound the right hand side of (\ref{lpp33}) from below by
as
$$4m^2a^2 +  \frac{2m^{2}(1-a^2)^{1-p}}{p}(\epsilon -a)^p $$
If $a> \tfrac12\epsilon^{p/2}$, this is bounded below by  $m^2\epsilon^p$. If 
$a \leq \tfrac12\epsilon^{p/2}$, this is bounded below by 
$\frac{1}{p}\left(\frac23\right)^{p-1} m^2\epsilon^p$. This proves (\ref{smokra2}).
\end{proof}

 \end{document}